\begin{document}

\newcommand{\note}[1]{{\color{magenta}[[#1]]}}

\newcommand{\grantinfo}{National Science Foundation RTG grant DMS-1840234}
\newcommand{\papertitle}{\large{Functions with integer-valued divided differences}}
\newcommand{\shortpapertitle}{{FUNCTIONS WITH INTEGER-VALUED DIVIDED DIFFERENCES}}

\title[\shortpapertitle]{\papertitle}

\author{Andrew O'Desky}
\address{
\parbox{0.5\linewidth}{
    Department of Mathematics\\
    Princeton University\\
    Princeton, NJ 08544-1000, USA\\[.1em]
    \href{http://www.andrewodesky.com/}{\texttt{http://www.andrewodesky.com/}}\\[-.5em]}
}

\date{\today}

\thanks
{This work was supported in part by {\grantinfo}.}

\begin{abstract} 
Let 
$s_0,s_1,s_2,\ldots$ be 
    a sequence of rational numbers whose 
    $m$th divided difference is integer-valued. 
We prove that $s_n$ is a polynomial function in $n$ 
    if $s_n \ll \theta^n$ 
    for some positive number $\theta$ satisfying 
    $\theta < 
    e^{1 + \tfrac{1}{2} + \cdots+ \tfrac{1}{m}} -1$.  
\end{abstract} 

\keywords{divided differences, harmonic numbers, integer-valued polynomials, pseudo-polynomials, nonarchimedean analysis}
\subjclass[2010]{Primary 11B99, 26E30; Secondary 11C08, 13F20}

\maketitle

\section{Introduction} 
\label{sec:intro}

In 1971, 
{Hall} and {Ruzsa}
independently discovered
an elegant characterization of polynomial functions among 
congruence-preserving functions. 

\begin{theorem*}[Hall--Ruzsa]
Let $\seq \colon \NN \to \QQ$ be a function. 
Suppose that\footnote
    {We write $r_n \ll q_n$ to mean there is a constant $C>0$ 
    such that $|r_n| \leq C |q_n|$ for all $n$.}
\begin{enumerate}
    \item[\pnum{i}] $\seq$ is integer-valued,  
    \item[\pnum{ii}] $\seq[n+k] \equiv \seq[n] \Mod k$ 
            for all $n,k \in \NN$, and  
        \item[\pnum{iii}] $\seq[n] \ll \theta^n$ 
            for some positive number $\theta$ satisfying
            $\theta<e-1$.  
\end{enumerate}
Then $\seq[n]$ is a polynomial in $n$. 
\end{theorem*}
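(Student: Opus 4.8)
The plan is to expand $s$ in the Newton basis and pit the archimedean size of its coefficients against their divisibility. Write
\[ s_n=\sum_{k\ge 0}c_k\binom nk,\qquad c_k=\sum_{j=0}^k(-1)^{k-j}\binom kj s_j, \]
a sum that is finite for each fixed $n$; hypothesis \pnum{i} makes every $c_k$ an integer, and $s_n$ is a polynomial in $n$ of degree $<N$ precisely when $c_k=0$ for all $k\ge N$. Hypothesis \pnum{iii} yields the crude bound $|c_k|\le\sum_{j=0}^k\binom kj|s_j|\ll\sum_{j=0}^k\binom kj\theta^j=(1+\theta)^k$. It therefore suffices to show that $c_k$ is divisible by an integer that, for $k$ large, exceeds this bound; I will prove $\lcm(1,2,\dots,k)\mid c_k$, which is enough because $1+\theta<e$.

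For the divisibility one argues one prime at a time: since $\lcm(1,\dots,k)=\prod_p p^{\lfloor\log_p k\rfloor}$, it is enough that $p^e\mid c_k$ whenever $p^e\le k$. Fix such $p,e$ and reduce modulo $p^e$. Applying \pnum{ii} with $p^{e'}$ in place of $k$, for $e'\le e$, gives $s_{n+p^{e'}}\equiv s_n\Mod{p^{e'}}$ for all $n$; equivalently $s$ extends to a $1$-Lipschitz map $\ZZ_p\to\ZZ_p$, and in particular the reduction $\bar s\colon\ZZ/p^e\to\ZZ/p^e$ is well defined, still $1$-Lipschitz for the $p$-adic metric, and of period $p^e$. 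Now $c_k\bmod p^e$ equals $\Delta^k\bar s$ at $0$, that is $(E-1)^k\bar s(0)$ with $E$ the shift operator; as $E^{p^e}$ acts trivially on functions of period $p^e$, one may compute modulo $E^{p^e}-1$. Writing $E^{p^e}=\big((E-1)+1\big)^{p^e}$ and using $v_p\binom{p^e}{i}=e-v_p(i)$, the congruence $(E-1)^{p^e}\bar s\equiv 0\Mod{p^e}$ reduces to the assertion that $(E-1)^i\bar s\equiv 0\Mod{p^{v_p(i)}}$ for $1\le i<p^e$; then $(E-1)^k\bar s=0$ for all $k\ge p^e$ follows by applying $(E-1)^{k-p^e}$. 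The case of $i$ a power of $p$ unwinds, after dividing by $p$ repeatedly, to the $1$-Lipschitz property of $\bar s$, and the general case follows by a short induction on $v_p(i)$; for $e=1$ this is just the Frobenius identity $(E-1)^p\equiv E^p-1\Mod p$. This gives $p^e\mid c_k$, hence $\lcm(1,\dots,k)\mid c_k$. (Alternatively one may invoke that $1$-Lipschitz maps $\ZZ_p\to\ZZ_p$ have the orthonormal Mahler-type basis $\big(p^{\lfloor\log_p k\rfloor}\binom xk\big)_{k\ge 0}$, so that their Mahler coefficients have valuation $\ge\lfloor\log_p k\rfloor$.)

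To conclude, $\log\lcm(1,\dots,k)$ is the Chebyshev function $\psi(k)$, and by the prime number theorem $\psi(k)=(1+o(1))k$, i.e.\ $\lcm(1,\dots,k)=e^{(1+o(1))k}$. Since $\theta<e-1$ we have $\log(1+\theta)<1$, so for all sufficiently large $k$
\[ |c_k|\ll(1+\theta)^k=e^{k\log(1+\theta)}<e^{(1+o(1))k}=\lcm(1,\dots,k). \]
As $c_k\in\ZZ$ and $\lcm(1,\dots,k)\mid c_k$, this forces $c_k=0$ for all large $k$, so $s_n=\sum_{k<N}c_k\binom nk$ is a polynomial in $n$.

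The Newton expansion and the archimedean estimate are routine, and the prime number theorem enters only as a black box — it is precisely the asymptotic $\lcm(1,\dots,k)^{1/k}\to e$ that pins the constant at $e-1$. The substantive point is the divisibility lemma $\lcm(1,\dots,k)\mid c_k$: reducing it to the local bound $v_p(c_k)\ge\lfloor\log_p k\rfloor$ is clean, but controlling $(E-1)^k$ modulo $p^e$ for $1$-Lipschitz functions — beyond the first layer $e=1$, which Frobenius settles — is the step I expect to demand the most care.
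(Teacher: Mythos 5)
Your proof is correct, and it reaches the Hall--Ruzsa theorem by a more elementary, self-contained route than the paper, which never proves this statement directly: there it is the $m=1$ case of Theorem~\ref{thm:introthm}, whose proof runs through the generalized Mahler isometry (Theorem~\ref{thm:boundedmahler}), Schikhof's formula, the supremum formula \eqref{eqn:lemmaeqnnorm}, the asymptotics of Theorem~\ref{thm:harmonic} (with the Rosser--Schoenfeld refinement of Chebyshev's bound), and a product-formula argument over a number field (Lemma~\ref{lemma:nullsequences}). The global skeleton is the same in both arguments --- play the $p$-adic smallness of the finite differences $c_k$ against the archimedean bound $|c_k|\ll(1+\theta)^k$, with the prime number theorem pinning the constant at $e-1$ --- but your key local input, $\lcm(1,\dots,k)\mid c_k$, which is exactly the $m=1$ instance of the paper's estimate $|c_k|_p\le p^{-\tau_{p,1}(k)}=p^{-[\log_p k]}$ from Theorem~\ref{prop:normdivdiff}, is proved by hand via shift-operator congruences (essentially Hall's and Ruzsa's original device) instead of via Mahler theory; and at $m=1$ the plain asymptotic $\psi(k)=\log\lcm(1,\dots,k)\sim k$ suffices, whereas the paper needs the sharper Chebyshev bound only to control error terms that appear for general $m$. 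What each buys: your argument is short and entirely elementary for this case; the paper's machinery is what makes the harmonic-number threshold $e^{H_m}-1$ accessible for $m\ge 2$, where no equally simple analogue of your divisibility lemma is apparent. One bookkeeping remark: organize the divisibility lemma as a strong induction on $e$ in the single statement $\Delta^{p^e}\bar s\equiv 0\Mod{p^e}$ (base case $e=1$ by Frobenius), since your auxiliary claims $(E-1)^i\bar s\equiv 0\Mod{p^{v_p(i)}}$ for $1\le i<p^e$ are instances of that same statement at smaller exponents followed by applying $(E-1)^{i-p^{v_p(i)}}$; the phrase ``dividing by $p$ repeatedly'' does not literally work (the quotient $(s(n+p)-s(n))/p$ need not preserve congruences), but the induction you set up does, so this is a presentational fix rather than a gap.
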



In this article we generalize this theorem 
    by using the integrality of higher divided differences. 
The first two hypotheses of the Hall--Ruzsa theorem are equivalent to requiring that 
$\dd{0}{\seq}:=\seq$ and $\dd{1}{\seq}$ are integer-valued, 
where 
$$
\dd{1}{\seq}(n,m):=
\frac{\seq[n]-\seq[m]}{n-m}
\quad
(n \neq m)
$$
is the \defn{first divided difference of $\seq$}. 

\vspace{.5em}
Our main result is the following theorem. 


\begin{theorem}\label{thm:introthm}
    Let $s \colon \NN \to \QQ$ be a function. 
    Suppose that 
    \begin{enumerate}
        \item[\pnum{i}] the $m$th divided difference $\dd{m}{\seq}$ of $\seq$ is integer-valued, and 
        \item[\pnum{ii}] $\seq[n] \ll \theta^n$ 
            for some positive number $\theta$ satisfying 
        \begin{equation} 
            \theta < e^{1 + \tfrac{1}{2} + \cdots+ \tfrac{1}{m}} -1. 
        \end{equation} 
    \end{enumerate}
    Then $\seq[n]$ is a polynomial in $n$. 
\end{theorem}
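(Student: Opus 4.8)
The plan is to pass to the Mahler--Newton coefficients of $\seq$. Set
\[
c_k := \Delta^k\seq(0) = \sum_{j=0}^{k}(-1)^{k-j}\binom{k}{j}\seq[j] = k!\,\dd{k}{\seq}(0,1,\dots,k),
\]
so that $\seq[n] = \sum_{k\ge 0}c_k\binom{n}{k}$ for every $n\in\NN$. Since this identity is exact rather than asymptotic, $\seq[n]$ is a polynomial in $n$ precisely when $c_k=0$ for all but finitely many $k$, and I would prove the contrapositive. The easy half is an upper bound coming from hypothesis (ii): $|c_k| \le \sum_{j=0}^{k}\binom{k}{j}|\seq[j]| \ll \sum_{j=0}^{k}\binom{k}{j}\theta^j = (1+\theta)^k$.

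The core of the argument is a matching lower bound coming from hypothesis (i): \emph{if $\dd{m}{\seq}$ is integer-valued then $c_k\in\ZZ$ for $k\ge m$, and $c_k$ is divisible by an integer $Q_k$ with $\log Q_k = \big(1+\tfrac12+\cdots+\tfrac1m\big)k + o(k)$}; in particular $c_k\ne 0$ forces $|c_k|\ge Q_k$. The $Q_k$ are the ``$m$th divided-difference factorials'': for $k\ge m$ the set $\{c_k(\seq):\dd{m}{\seq}\text{ integer-valued}\}$ is an ideal $Q_k\ZZ$ of $\ZZ$, and I expect $Q_k = \prod_{j=1}^{m}\lcm\!\big(1,\dots,\lfloor k/j\rfloor\big)$. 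Determining these is where the nonarchimedean analysis lives. Fixing a prime $p$, the basic input is that evaluating $\dd{m}{\seq}$ on the arithmetic progression $x,\,x+q,\,\dots,\,x+mq$ gives $m!\,q^m \mid \Delta_q^m\seq(x)$ for all $x$ and all $q\ge 1$, where $\Delta_q\seq(x) = \seq[x+q]-\seq[x]$; taking $q=p^i$ and organising these congruences by the $p$-adic scale $i$ produces a layered description of $\seq$ modulo high powers of $p$ --- a $C^m$-type regularity statement for the $p$-adic interpolation of $\seq$ --- from which one reads off
\[
v_p(c_k) \;\ge\; \sum_{j=1}^{m}\big\lfloor\log_p(k/j)\big\rfloor \;=\; v_p\!\Big(\prod_{j=1}^{m}\lcm(1,\dots,\lfloor k/j\rfloor)\Big).
\]

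Granting this, the theorem follows from the prime number theorem in the form $\log\lcm(1,2,\dots,n)\sim n$: summing, $\log Q_k = \sum_{j=1}^{m}\log\lcm(1,\dots,\lfloor k/j\rfloor) = \big(1+\tfrac12+\cdots+\tfrac1m\big)k + o(k)$. If $c_k\ne 0$ for infinitely many $k$, then along that subsequence $Q_k\le|c_k|\ll(1+\theta)^k$, so $\log Q_k \le \log(1+\theta)\,k + O(1)$; dividing by $k$ and letting $k\to\infty$ gives $1+\tfrac12+\cdots+\tfrac1m\le\log(1+\theta)$, i.e.\ $\theta\ge e^{1+\frac12+\cdots+\frac1m}-1$, contradicting hypothesis (ii). Hence $c_k=0$ for all large $k$ and $\seq[n]$ is a polynomial in $n$. (For $m=1$ one has $Q_k=\lcm(1,\dots,k)$, recovering the Hall--Ruzsa theorem.)

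The main obstacle is the divisibility statement of the second paragraph --- the exact determination, or at least a sharp enough lower bound, of $v_p(c_k)$. The naive consequences of hypothesis (i) fall well short: reducing $p^{im}\mid\Delta_{p^i}^m\seq(x)$ modulo $p$ only yields $p\mid c_k$ once $k\ge mp$ (since $\Delta_{p^i}^m\equiv\Delta^{mp^i}\Mod p$), hence merely the threshold $e^{1/m}-1$; recovering the full harmonic sum $1+\tfrac12+\cdots+\tfrac1m$ requires exploiting the higher powers of $p$ in the congruences and the interaction between the $m$ ``degrees of freedom'' in the divided difference and the tower of $p$-adic scales $p, p^2, p^3, \dots$. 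That multi-scale bookkeeping is the technical heart of the argument.
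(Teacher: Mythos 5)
Your overall architecture is the same as the paper's (a $p$-adic lower bound on the valuations of the finite differences $\findiff[k]$, a prime-number-theorem asymptotic producing the harmonic number $\harm{m}$, and a contradiction with the archimedean bound $|\findiff[k]|\ll(1+\theta)^k$), but the step you flag as "the main obstacle" is a genuine gap, and it is exactly where the paper's work lies. Your key claim --- that integrality of $\dd{m}{\seq}$ forces $v_p(\findiff[k])\ge\sum_{j=1}^{m}\lfloor\log_p(k/j)\rfloor$, equivalently $\prod_{j=1}^{m}\lcm(1,\dots,\lfloor k/j\rfloor)\mid \findiff[k]$ --- is precisely Theorem~\ref{prop:normdivdiff}: your exponent equals $\tau_{p,m}(k)$, since the $j$th largest $p$-adic valuation among $1,\dots,k$ is $\lfloor\log_p(k/j)\rfloor$ (compare Proposition~\ref{prop:tauexplicitformula}). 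The paper does not obtain this from congruences of the type $m!\,q^m\mid\Delta_q^m\seq(x)$; as you yourself note, integrality of $\dd{m}{\seq}$ restricted to arithmetic progressions is strictly weaker input, and your sketch of "multi-scale bookkeeping" is not carried out. Instead the paper proves an extension of Mahler's theorem to bounded, not necessarily continuous, functions on $\NN^{m+1}$ (Theorem~\ref{thm:boundedmahler}), and combines it with Schikhof's closed formula for the Mahler coefficients of $\dd{m}{\seq}$, which exhibits each coefficient as $\findiff[\ell]$ divided by a product of $m$ distinct positive integers $\le\ell$; the supremum over such products is what yields $\norm[\dd{m}{\seq}]_p=\sup_{\ell\ge m}|\findiff[\ell]|_p\,p^{\tau_{p,m}(\ell)}$, hence the divisibility you need. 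Without some proof of this local statement your argument does not go through, and nothing in the proposal supplies one.

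The remaining ingredients of your outline are correct and correspond to the paper: the upper bound $|\findiff[k]|\ll(1+\theta)^k$ is the archimedean estimate in the proof of Theorem~\ref{thm:general}; the asymptotic $\log\prod_{j=1}^{m}\lcm(1,\dots,\lfloor k/j\rfloor)=k\harm{m}+o(k)$ is Theorem~\ref{thm:harmonic} (the paper gets an explicit Rosser--Schoenfeld error term, but $o(k)$ suffices for Theorem~\ref{thm:introthm}); and the final contradiction via "a nonzero integer divisible by $Q_k$ has absolute value at least $Q_k$" is Lemma~\ref{lemma:nullsequences} specialized to $K=\QQ$, $S=\{\infty\}$ (the product formula in disguise). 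Also, your sharper assertion that the set of possible values of $\findiff[k]$ is exactly the ideal $Q_k\ZZ$ is not needed --- only the divisibility direction is --- but that divisibility is the technical heart of the theorem, not a routine verification, so as it stands the proposal is an accurate plan rather than a proof.
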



Recall the \defn{$m$th divided difference of $\seq$} is 
the function $\dd{m}{\seq}$ defined by 
\begin{equation*}
\dd{m}{\seq} (n_0,\ldots,n_m) := 
\sum_{i=0}^m
\Big\{
    \prod_{j\neq i}
        (n_i-n_j)^{-1}
\Big\}
\seq[n_i]
\quad
    (n_i \text{ distinct}).
\end{equation*}
Divided differences are a classical construction 
of basic importance in interpolation theory and 
nonarchimedean analysis  
(cf. e.g. 
\cite{milne-1951}, 
\cite{schikhof}, 
\cite{bhargava-2009}). 



\subsection{Outline of the paper} 

The first part of this paper (\S\ref{sec:divdiff}) consists of 
a local analysis of $\dd{m}{\seq}$ at nonarchimedean places. 
Our first new result is a formula for the $p$-adic supremum 
$\norm[\dd{m}{\seq}]_p$ of $\dd{m}{\seq}$. 
To state our formula, we make use of a (possibly new) 
elementary arithmetic function. 
Let $\tau_{p,m}(n)$ denote 
the largest possible $p$-adic 
valuation of a product of $m$ distinct positive 
integers which are all less than or equal to $n$. 
(Proposition~\ref{prop:tauexplicitformula} gives 
an explicit formula for $\tau_{p,m}$.)
Let $\seq \colon \NN \to \QQ_p$ be a function 
and set 
$
\findiff[n] := 
\sum_{k = 0}^n 
\binom{n}{k}(-1)^{n-k}\seq[k]$. 
We show that 
\begin{equation}\label{eqn:supphim}
    \norm[\dd{m}{\seq}]_p
=
\sup_{n \geq m}
|\findiff[n]|_pp^{\tau_{p,m}(n)}. 
\end{equation}
This formula gives a Mahler-type criterion 
for the $p$-integrality of $\dd{m}{\seq}$. 
Along the way, we prove a generalization of Mahler's theorem to bounded, 
not-necessarily-continuous functions (Theorem~\ref{thm:boundedmahler}). 

The second part of this paper 
(\S\ref{sec:asymptotics}) consists of 
a global calculation involving some analytic estimates 
of certain sums over primes. 
Our third new result is that 
\begin{equation} 
    \label{eqn:tauasymptoticvalue}
\sum_{p \leq n}
    {\tau_{p,m}(n)}\log p 
= 
{n \harm{m} + O(n \exp\{-\alpha (\log n)^{1/2} \} \log n )} 
\end{equation} 
    for some positive constant $\alpha$. 

In the third part of this paper (\S\ref{sec:proofs}) 
we combine the local and global results ---  
\eqref{eqn:supphim} and 
\eqref{eqn:tauasymptoticvalue} --- 
to prove Theorem~\ref{thm:introthm}. 
(In fact we prove 
a more general version 
which also includes the main result of \cite{hill-stra-1970}.) 

The $p$-integrality of divided differences is 
a natural condition on functions $s \colon \NN \to \QQ_p$ 
which appears to have not been given a thorough treatment in 
the present literature on interpolation and $p$-adic analysis. 
Therefore it seems worthwhile to us 
to explore this condition somewhat. 
In the last section (\S\ref{sec:interpretations}), 
which is independent from the rest of the paper, 
we give two interpretations of this condition. 



\subsection*{Notation} 
A place $\place$ of a number field $\nfi$ is an equivalence 
class of isometric embeddings 
$\embedding \colon \nfi \to \CC_{p}$ with 
$p \in \primes{\QQ}=\{ 2,3,5, \ldots, \infty\}$ where 
$\CC_p$ is the metric completion of an algebraic closure of 
the $p$-adic field $\QQ_p$ (where $\QQ_\infty := \RR$). 
The set of all places of $\nfi$ is denoted by $\primes{\nfi}$. 
The local degree at $\place$ is denoted by $\localdgr{\place}$. 
We write $|\cdot|_p$ for 
the unique norm on $\CC_p$ extending the norm on $\QQ_p$, 
normalized by $|p|_p = 1/p$. 
The norm associated to a place $\place$ of $K$ 
extending $p \in \primes{\QQ}$ 
is defined by $|x|_\place:=|\embedding(x)|_{p}$ 
for a representative embedding $\embedding$ of $\place$. 
For nonarchimedean places, 
the additive $\place$-adic valuation 
is defined by $\valn[\place]:=\log_{p} |\cdot|_\place^{-1}$ 
where $\log_{p}$ is the base-$p$ logarithm. 
We write $[\,\cdot\,]$ for the floor function   
and $\harm{m} = 1 + \tfrac12 + \cdots + \tfrac1m$ 
for the $m$th harmonic number. 
We write $\chebyshev(n)= \sum_{p \leq n} \log p$ for the 
Chebyshev function and $\pi(n)$ for 
the number of rational primes less than or equal to $n$.  
We define $\Ndiag{m} := 
\{(n_0,\ldots,n_m) \in \NN^{m+1} : 
n_i \text{ all distinct} \}.$  


\section{Background from difference calculus and \texorpdfstring{$p$}{p}-adic analysis}\label{sec:background}  

Let $m$ be a non-negative integer. 
Define $$
\Ndiag{m} := 
\{(n_0,\ldots,n_m) \in \NN^{m+1} : 
n_i \text{ all distinct} \}. 
$$
Let $s \colon \NN \to E$ be a function valued in a $\QQ$-algebra $E$. 
Recall the \defn{$m$th divided difference of $\seq$} 
is the function $\dd{m}{\seq} \colon \Ndiag{m} \to E$ 
given by 
\begin{equation}\label{eqn:divdiff}
\dd{m}{\seq} (n_0,\ldots,n_m) := 
\sum_{i=0}^m
\Big\{
    \prod_{j\neq i}
        (n_i-n_j)^{-1}
\Big\}
\seq[n_i].
\end{equation}
(It is well-known that $\seq$ 
can be reconstructed using the values of its 
divided differences by means of 
Newton's interpolation formula 
(cf.~\eqref{eqn:NIF} or \cite{milne-1951}, \S1).) 
Recall the \defn{$n$th finite difference of $\seq$} is 
defined by\footnote 
{Strictly speaking, this is the sequence obtained by 
evaluating the finite differences of $\seq$ at zero. 
} 
\begin{equation}\label{eqn:defnfindiff}
\findiff[n] := 
\sum_{k = 0}^n 
\binom{n}{k}(-1)^{n-k}\seq[k].
\end{equation}

\begin{lemma}\label{lemma:classical}
Let $\seq \colon \NN \to E$ be a function 
and let $\findiff \colon \NN \to E$ be 
    its finite differences.
Then $\seq$ is polynomial 
if and only if  
$\findiff$ is eventually zero. 
\end{lemma}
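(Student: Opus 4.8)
The plan is to deduce both implications from the fact that $\seq$ and its finite differences $\findiff$ are interchanged by a pair of mutually inverse transforms, the inverse one being \emph{Newton's forward-difference formula}
\begin{equation*}
\seq[n] \;=\; \sum_{k=0}^{n}\binom{n}{k}\findiff[k],
\end{equation*}
which is the two-point degeneration of Newton's interpolation formula \eqref{eqn:NIF} (or \cite{milne-1951}, \S1). Alternatively one verifies it directly: substituting \eqref{eqn:defnfindiff} into the right-hand side and interchanging the order of summation reduces it, via the classical identity $\sum_{k}(-1)^{k-j}\binom{n}{k}\binom{k}{j}$ (which vanishes for $j<n$ and equals $1$ for $j=n$), to $\seq[n]$. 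The upshot is that $\findiff\mapsto\seq$ is a bijection on the set of sequences $\NN\to E$, under which eventually-zero sequences $\findiff$ correspond exactly to sequences of the form $n\mapsto\sum_{k=0}^{N}a_{k}\binom{n}{k}$ with $a_{k}\in E$.

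For the ``if'' direction, suppose $\findiff[k]=0$ for all $k>N$. Then Newton's formula collapses to the \emph{finite} sum $\seq[n]=\sum_{k=0}^{N}\binom{n}{k}\findiff[k]$, where we use that $\binom{n}{k}=0$ for $k>n$. Each $\binom{x}{k}=\tfrac{1}{k!}\,x(x-1)\cdots(x-k+1)$ lies in $\QQ[x]\subseteq E[x]$ and has degree $k$, so the right-hand side is a polynomial function of $n$ of degree at most $N$; hence $\seq$ is polynomial.

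For the ``only if'' direction, suppose $\seq[n]=P(n)$ for some $P\in E[x]$ of degree $d$. The polynomials $\binom{x}{0},\ldots,\binom{x}{d}$ form an $E$-basis of the $E$-module of polynomials of degree $\le d$ in $E[x]$ — the change of basis from $1,x,\ldots,x^{d}$ is unitriangular over $\QQ$, hence invertible over $E$ — so write $P=\sum_{k=0}^{d}a_{k}\binom{x}{k}$ with $a_{k}\in E$. Then $\seq$ is the image under the (injective) map $\findiff\mapsto\seq$ of the eventually-zero sequence $(a_{0},\ldots,a_{d},0,0,\ldots)$; by injectivity $\findiff$ equals this sequence, so $\findiff[k]=0$ for $k>d$. (Equivalently, one notes $\findiff[n]=(\Delta^{n}\seq)(0)$ for the forward-difference operator $\Delta$, which lowers polynomial degree, whence $\Delta^{d+1}P=0$.)

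I do not expect any genuine obstacle: this is classical difference calculus. The only points needing a little care are that $E$ is merely a $\QQ$-algebra rather than a field, so one should work throughout with polynomial functions $\NN\to E$, using that the binomial coefficients have rational coefficients and therefore make sense and retain their degrees in $E[x]$, and that the relevant triangular change-of-basis matrices (and, if one wants uniqueness of the representing polynomial, the Vandermonde matrices) are invertible over $\QQ$ and hence over $E$.
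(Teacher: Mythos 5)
Your proof is correct and is essentially the paper's own argument: the ``if'' direction uses the binomial inversion \eqref{eqn:inverserelation} exactly as the paper does, and for the ``only if'' direction your parenthetical remark (the forward-difference operator $S-\id$ lowers polynomial degree, hence is nilpotent on polynomial sequences) is precisely the paper's proof. Your primary route for ``only if'' --- expanding $P$ in the binomial basis and invoking injectivity of the Newton transform --- is just an equivalent repackaging of the same inversion pair (and note the change of basis from monomials to $\binom{x}{k}$ is triangular with diagonal entries $1/k!$, not unitriangular, though still invertible over $\QQ$ and hence over $E$, which is all you need).
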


\begin{proof} 
Let $S$ be the forward shift operator on sequences defined by 
$(S\seq)(n):= \seq(n+1)$ for all non-negative integers $n$. 
Then for any non-negative integer $\ell$, 
$$
\{(S-\id)^n\seq\}(\ell)
=
\sum_{k = 0}^n 
\binom{n}{k}(-1)^{n-k}\seq[\ell+k], 
$$
and in particular, $\{(S-\id)^n\seq\}(0) = \findiff[n]$. 
We have that $(S-\id)(n^d) = dn^{d-1} + O(n^{d-2})$, 
and so the restriction of $S-\id$ to 
the space of polynomial sequences is nilpotent. 
This shows that $\findiff$ is eventually zero 
if $\seq$ is polynomial.  

Conversely, assume that $\findiff$ is eventually zero. 
It is easy to verify that the inverse relation of 
\eqref{eqn:defnfindiff} is given by 
\begin{equation}\label{eqn:inverserelation}
\seq[n]=
\sum_{k = 0}^n 
    \binom{n}{k}
    \findiff[k],
\end{equation}
and this shows that $\seq$ is given by a polynomial of degree 
$\leq N$ if $\findiff[n] = 0$ for $n > N$. 
\end{proof} 

We recall a classical theorem of Mahler. 
Let $\binom{x}{n} := \frac{x(x-1)\cdots(x-n+1)}{n!}\in\QQ[x]$. 

\begin{theorem*}[Mahler~\cite{mahler}]
Let $\seq \colon \NN \to \CC_p$ be a function 
and let $\findiff \colon \NN \to \CC_p$ denote 
    its finite differences.
Then $\seq$ extends to a continuous function 
$f \colon \ZZ_p \to \CC_p$ if and only if 
    $\ds\lim_{n \to \infty} |\findiff[n]|_p = 0$. 
Furthermore, 
    if $\seq$ extends to a continuous function 
    $f \colon \ZZ_p \to \CC_p$, 
    then the `Mahler series' 
\begin{equation}
    \sum_{n = 0}^\infty
    \findiff[n]
        \binom{x}{n} 
\end{equation}
    converges uniformly to $f$ in $C(\ZZ_p,\CC_p)$ 
and 
\begin{equation}
    \sup_{x \in \ZZ_p} |f(x)|_p
    =
    \sup_{n \geq 0}
    |\findiff[n]|_p.
\end{equation}
\end{theorem*}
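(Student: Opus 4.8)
The plan is to establish the two implications separately and then read off the supplementary assertions about the Mahler series and the sup-norm. Write $\Delta:=S-\id$ for the difference operator appearing in the proof of Lemma~\ref{lemma:classical}, so that for any function $g$ on $\NN$ (or on $\ZZ_p$) one has $(\Delta^n g)(0)=\sum_{k=0}^n\binom{n}{k}(-1)^{n-k}g(k)$, and in particular $(\Delta^n\seq)(0)=\findiff[n]$. I will use repeatedly that each polynomial $\binom{x}{n}$ is integer-valued, hence carries $\ZZ_p$ into $\ZZ_p$ by density of $\ZZ$ and continuity, so that $\sup_{x\in\ZZ_p}|\binom{x}{n}|_p=1$ (the value being attained at $x=n$). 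Set $\|g\|_\infty:=\sup_{x\in\ZZ_p}|g(x)|_p$ for $g\in C(\ZZ_p,\CC_p)$.

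For the easy implication, suppose $\lim_n|\findiff[n]|_p=0$. Since $\CC_p$ is complete and nonarchimedean, the series $\sum_n\findiff[n]\binom{x}{n}$ converges uniformly on $\ZZ_p$ to a continuous function $f$, and the ultrametric inequality gives $\|f\|_\infty\le\sup_n|\findiff[n]|_p$. Evaluating at $x=k\in\NN$ and using the inversion formula \eqref{eqn:inverserelation} yields $f(k)=\sum_{n=0}^k\binom{k}{n}\findiff[n]=\seq[k]$, so $f$ extends $\seq$, and it is the unique such continuous extension because $\NN$ is dense in $\ZZ_p$. Conversely, \eqref{eqn:defnfindiff} exhibits $\findiff[n]$ as a $\ZZ$-linear combination of $f(0),\dots,f(n)$, whence $|\findiff[n]|_p\le\|f\|_\infty$ for all $n$ and therefore $\sup_n|\findiff[n]|_p\le\|f\|_\infty$; combined with the preceding inequality this gives the claimed identity of sup-norms.

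The substantive implication is the converse: assuming $\seq$ extends to a continuous $f\colon\ZZ_p\to\CC_p$, show $|\findiff[n]|_p\to0$. I would rescale so that $\|f\|_\infty\le1$ and exploit that $\Delta^{p^s}$ contracts the sup-norm. Since $v_p\binom{p^s}{j}=s-v_p(j)\ge1$ for $0<j<p^s$, the expansion $(\Delta^{p^s}f)(x)=\sum_{j=0}^{p^s}(-1)^{p^s-j}\binom{p^s}{j}f(x+j)$ reduces modulo $p$ to $(\Delta^{p^s}f)(x)\equiv f(x+p^s)-f(x)\Mod p$, uniformly in $x$ (for $p=2$, where $(-1)^{p^s}=+1$, one first absorbs the leftover term $2f(x)$ into the modulus; this is the only point requiring a little care). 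As $\ZZ_p$ is compact, $f$ is uniformly continuous, so for all large $s$ we have $|f(x+p^s)-f(x)|_p\le1/p$ for every $x$, hence $\|\Delta^{p^s}f\|_\infty\le1/p$. Applying this bound to $p\cdot\Delta^{p^s}f$ and iterating produces, for each $t\ge1$, an exponent $N_t$ with $\|\Delta^{N_t}f\|_\infty\le p^{-t}$. Finally, since $\|\Delta h\|_\infty\le\|h\|_\infty$ for every $h$, the sequence $\bigl(\|\Delta^n f\|_\infty\bigr)_n$ is non-increasing, so having a subsequence tending to $0$ forces the whole sequence to $0$; in particular $|\findiff[n]|_p=|(\Delta^n f)(0)|_p\to0$.

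With both implications available, the remaining assertions follow at once: a continuous extension $f$ has $|\findiff[n]|_p\to0$, so the easy implication shows the Mahler series converges uniformly to a continuous function agreeing with $\seq$ on $\NN$, necessarily $f$ itself, and the two opposite inequalities above combine to $\|f\|_\infty=\sup_n|\findiff[n]|_p$. The main obstacle is the converse implication, and within it the step from decay of $\|\Delta^n f\|_\infty$ along the sparse set $\{p^s\}_s$ to decay along all $n$; the elementary monotonicity $\|\Delta^{n+1}f\|_\infty\le\|\Delta^n f\|_\infty$ is precisely what bridges that gap, while the mod-$p$ estimate for $\Delta^{p^s}$ (especially at $p=2$) is the one computation that needs genuine attention.
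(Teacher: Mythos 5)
The paper itself does not prove this statement: Mahler's theorem is recalled purely as background and attributed to \cite{mahler}, so there is no internal proof to compare with; I can only assess your argument on its own terms. What you give is the standard Bojani\'c-style proof, and its structure is sound: the easy direction (uniform convergence of the Mahler series when $|\findiff[n]|_p\to 0$, agreement with $\seq$ on $\NN$ via \eqref{eqn:inverserelation}, and the two opposite ultrametric inequalities giving $\sup_{x\in\ZZ_p}|f(x)|_p=\sup_n|\findiff[n]|_p$) is correct, as are the key ingredients of the hard direction: $v_p\binom{p^s}{j}=s-v_p(j)\geq 1$ for $0<j<p^s$, the $p=2$ caveat, uniform continuity on the compact $\ZZ_p$, and the monotonicity $\|\Delta^{n+1}f\|_\infty\leq\|\Delta^n f\|_\infty$ that upgrades decay along the sparse exponents $N_t$ to decay along all $n$. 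The one step that is wrong as written is the iteration: you propose to apply the bound to $p\cdot\Delta^{p^s}f$, but multiplying by $p$ \emph{shrinks} the $p$-adic sup-norm (from $\leq p^{-1}$ to $\leq p^{-2}$), and feeding that into the normalized lemma only returns $\|\Delta^{p^{s'}}\Delta^{p^s}f\|_\infty\leq 1$, i.e.\ no gain. The rescaling must go the other way: apply the lemma to $p^{-1}\Delta^{p^s}f$, which has sup-norm $\leq 1$, obtaining $\|\Delta^{p^{s'}}(p^{-1}\Delta^{p^s}f)\|_\infty\leq p^{-1}$ and hence $\|\Delta^{p^s+p^{s'}}f\|_\infty\leq p^{-2}$, and so on; equivalently, avoid rescaling altogether by stating the key estimate scale-invariantly, $\|\Delta^{p^s}h\|_\infty\leq\max\{p^{-1}\|h\|_\infty,\;\sup_{x\in\ZZ_p}|h(x+p^s)-h(x)|_p\}$ for any continuous $h$, and iterating. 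With that one-line correction your proof is complete.
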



\section{The \texorpdfstring{$p$}{p}-adic size of divided differences} 
\label{sec:divdiff} 

Let $p$ be a fixed prime integer. 
Let $E$ be a finite-degree field extension of $\QQ_p$ and 
let $|\cdot|$ denote the norm on $E$ 
which extends the usual $p$-adic norm on $\QQ_p$. 
Let $\seq \colon \NN \to E$ be a function 
and set 
$\findiff[n] = 
\sum_{k = 0}^n 
\binom{n}{k}(-1)^{n-k}\seq[k]$. 
The goal of this section is  
to prove the formula 
\begin{equation}\label{eqn:lemmaeqnnorm2}
\sup_{0 \leq n_0<\cdots<n_m}
    |\dd{m}{\seq}(n_0,\ldots,n_m)| = 
\sup_{n \geq m}
|\findiff[n]|p^{\tau_{p,m}(n)} 
\end{equation}
where $\tau_{p,m}(n)$ is the maximal $p$-adic 
valuation of a product of $m$ distinct positive 
integers $\leq n$, i.e.
$$\tau_{p,m}(n):= 
\max_{\substack{S \subset \{1,\ldots,n\},\\ \# S = m}}
\valn[p]\Big(\prod_{k \in S} k\Big ).
$$

\subsection{The Mahler series of a bounded function} 

Let $\Bdd(\NN^{m+1})$ denote the $E$-Banach space of 
bounded functions $F \colon \NN^{m+1} \to E$ 
under pointwise addition and scalar multiplication 
equipped with the norm 
$$
\norm[F]:= 
\sup_{\ul{n}\in \NN^{m+1}}
|F(\ul{n})|. 
$$
This $p$-adic Banach space contains 
two important closed subspaces: 
\begin{equation} 
    \mathrm{C}(\NN^{m+1}):= \{F \colon \NN^{m+1} \to E \mid 
        \text{$F$ extends to a continuous function 
        $\ZZ_p^{m+1} \to E$}\}
\end{equation} 
and 
\begin{equation} 
    \mathrm{c}_0(\NN^{m+1}):=
    \{C \colon \NN^{m+1} \to E \mid 
        \text{$C$ vanishes at infinity\footnotemark}\}.
\end{equation} 
\footnotetext{i.e., $\ds\lim_{N \to \infty} 
        \sup_{j_0 + \cdots + j_m > N} \big|C({\ul j})\big| = 0 $.}
Let $\ul x = (x_0,\ldots,x_m)$ be indeterminates, 
$\ul j = (j_0,\ldots,j_m)$ be non-negative integers, and 
define $\binom{\ul{x}}{\ul{j}} :=
\binom{x_0}{j_0}
\binom{x_1}{j_1}
\cdots
\binom{x_m}{j_m} \in \QQ[x_0,\ldots,x_m].$ 
Mahler's theorem easily generalizes to multivariate functions 
(cf. \cite{amice-1964}, Corollaire~1, \S2.7) 
and amounts to the assertion that there is a unique isometry 
\begin{align}\label{eqn:isometrymahler} 
\mathcal{M}_0\colon \mathrm{C}(\NN^{m+1}) &\xrightarrow{\sim} \mathrm{c}_0(\NN^{m+1}) \\
    F &\mapsto C\nonumber
\end{align} 
with the property that for all $F\in\mathrm{C}(\NN^{m+1})$ 
the infinite series 
\begin{equation} 
    \sum_{\ul{j} \in \NN^{m+1}} 
        C(\ul{j})\binom{\ul{x}}{\ul{j}}
\end{equation} 
converges in $\mathrm{C}(\NN^{m+1})$ to $F$.

We are interested in computing the norm of $\dd{m}{\seq}$ 
(strictly speaking, $\dd{m}{\seq}$ is not defined on 
all of $\NN^{m+1}$, 
but this technical issue will be easily resolved later). 
Mahler's theorem is therefore insufficient for our purposes, 
since $\dd{m}{\seq}$ is only assumed to be $p$-integral 
and inhabits neither $\mathrm{C}(\NN^{m+1})$ nor $\mathrm{c}_0(\NN^{m+1})$ 
in general. 

To resolve this problem, we will show that the isometry 
$\mathcal M_0$
extends to a self-isometry of $\Bdd(\NN^{m+1})$. 
The following notion will be helpful: 
if $F,C \colon \NN^{m+1} \to E$ are functions, 
not necessarily bounded or continuous, 
we say that 
\begin{equation} 
    \sum_{\ul{j} \in \NN^{m+1}} 
        C(\ul{j})\binom{\ul{x}}{\ul{j}} 
\end{equation} 
is a \defn{generalized Mahler series for $F$} if 
\begin{equation}\label{eqn:finitesum} 
    F(\ul{n}) = \sum_{\ul{j} \in \NN^{m+1}} 
        C(\ul{j})\binom{\ul{n}}{\ul{j}}
        \quad \text{(finite sum)}
\end{equation} 
for all $\ul{n} \in \NN^{m+1}$. 
The values of $C$ are called the 
\defn{Mahler coefficients of $F$}. 
The following theorem generalizes Mahler's theorem to bounded functions 
which are not necessarily continuous. 

\begin{theorem}\label{thm:boundedmahler}
Every function $F \colon \NN^{m+1} \to E$ 
    admits a generalized Mahler series with 
    unique Mahler coefficients. 
Furthermore, 
    the mapping $F \mapsto C$ 
    which sends a bounded function $F$ to 
    its Mahler coefficients $C$ 
    extends $\mathcal{M}_0\colon \mathrm{C}(\NN^{m+1}) \xrightarrow{\sim} \mathrm{c}_0(\NN^{m+1})$ 
to a self-isometry $\mathcal M$ of $\Bdd(\NN^{m+1})$. 
\end{theorem}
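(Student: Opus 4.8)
The plan is to establish the existence and uniqueness of generalized Mahler coefficients by a purely formal inversion, and then separately verify that on bounded functions the inversion is norm-preserving. First I would observe that the map $\binom{\underline x}{\underline j}$ on multi-indices gives a triangular system: expanding $\binom{\underline n}{\underline j}$ componentwise shows $\binom{\underline n}{\underline j}=0$ unless $j_i\le n_i$ for every $i$, and $\binom{\underline n}{\underline n}=1$. Thus for a fixed $F$, the defining equation $F(\underline n)=\sum_{\underline j}C(\underline j)\binom{\underline n}{\underline j}$ is really a finite triangular linear system, and one solves for $C$ recursively in the (multi-index) partial order; this proves existence and uniqueness of $C$ for \emph{every} function $F$, bounded or not. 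Concretely, $C$ is obtained from $F$ by applying the multivariate finite-difference operator $(S_0-\id)^{j_0}\cdots(S_m-\id)^{j_m}$ and evaluating at zero --- i.e. $C(\underline j) = \sum_{\underline k \le \underline j}\binom{\underline j}{\underline k}(-1)^{|\underline j|-|\underline k|}F(\underline k)$, which is the multivariate analogue of \eqref{eqn:defnfindiff}, and \eqref{eqn:inverserelation} is exactly the inverse relation. This also makes it transparent that $F\mapsto C$ restricts to $\mathcal M_0$ on $\mathrm{C}(\NN^{m+1})$, since the formula for the Mahler coefficients is the same one Mahler's theorem uses.

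Next I would check that $\mathcal M$ maps $\Bdd(\NN^{m+1})$ into itself and is an isometry there. The key point is the classical fact that the one-variable operators $S_i - \id$ (and hence $\id$ and their products) do not increase the sup-norm in the $p$-adic setting: $|(S-\id)F| \le \max(|SF|,|F|) = \|F\|$ pointwise, since $|a-b|_p \le \max(|a|_p,|b|_p)$. Iterating, $\|C\| = \|\mathcal M F\| \le \|F\|$, so $\mathcal M$ is a norm-nonincreasing endomorphism of $\Bdd(\NN^{m+1})$. For the reverse inequality, apply the same reasoning to the inverse relation \eqref{eqn:inverserelation}: $F(\underline n) = \sum_{\underline k \le \underline n}\binom{\underline n}{\underline k}C(\underline k)$ expresses $F$ as a sum (with integer, hence $p$-adically bounded-by-$1$, coefficients) of values of $C$, giving $\|F\| \le \|C\|$. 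Hence $\|\mathcal M F\| = \|F\|$ for all bounded $F$, so $\mathcal M$ is a self-isometry; injectivity is immediate from uniqueness of coefficients, and surjectivity follows because the same two formulas let one recover a bounded $F$ from any prescribed bounded $C$ (the recovered $F$ is bounded by the isometry estimate just proved).

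The one genuine subtlety --- and the step I expect to be the main obstacle --- is the convergence claim implicit in calling this a ``Mahler series'': for bounded but non-continuous $F$ the series $\sum_{\underline j}C(\underline j)\binom{\underline x}{\underline j}$ need \emph{not} converge in $\Bdd(\NN^{m+1})$ in any topological sense (the Mahler coefficients need not vanish at infinity). This is why the theorem only asserts \eqref{eqn:finitesum}, i.e.\ pointwise evaluation at each $\underline n\in\NN^{m+1}$, which is a finite sum by triangularity. So I would be careful to state convergence only in this pointwise/finite-sum sense and \emph{not} claim convergence in the Banach space $\Bdd(\NN^{m+1})$; the content of the theorem beyond Mahler's is purely that the inversion formula is well-defined for all functions and is isometric on the bounded ones, which the two triangular identities above deliver. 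Finally I would record that $\mathcal M$ restricted to $\mathrm{c}_0(\NN^{m+1})$ lands in $\mathrm C(\NN^{m+1})$ and is the inverse of $\mathcal M_0$, so that $\mathcal M$ genuinely extends $\mathcal M_0$ in the stated sense, completing the proof.
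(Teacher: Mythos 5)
Your proposal is correct, and it reaches the result by a genuinely different route from the paper. The paper proceeds by induction on the number of variables: it freezes the last coordinate, applies the one-variable theory to each slice $G_a$ and then again to $a \mapsto D_a(\ul j)$, and uses the discreteness of $|E^\times|$ to replace suprema by maxima when passing the norm through the induction. You instead invert the system directly: the unitriangularity of $\binom{\ul n}{\ul j}$ with respect to the coordinatewise partial order gives existence and uniqueness of the coefficients for \emph{every} function, with the explicit multivariate finite-difference formula $C(\ul j)=\sum_{\ul k\le \ul j}\binom{\ul j}{\ul k}(-1)^{|\ul j|-|\ul k|}F(\ul k)$, and then the isometry on bounded functions follows from the fact that both this formula and its inverse \eqref{eqn:finitesum} have integer coefficients, via the ultrametric inequality in both directions. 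This is exactly the argument the paper uses for its base case $m=0$, so your contribution is to observe it globalizes to all variables at once, making the induction and the discreteness-of-value-group step unnecessary (your argument works over any nonarchimedean valued field, not just a finite extension of $\QQ_p$). What the paper's induction buys is the nested one-variable structure \eqref{eqn:mahlerseriesF}, which lets it quote the classical one-variable Mahler theorem and its inverse relation verbatim; what your route buys is brevity, an explicit closed-form for the coefficients, and slightly greater generality. Your cautionary remark that the series is only a pointwise finite-sum identity and need not converge in $\Bdd(\NN^{m+1})$ matches the remark the paper makes immediately after the theorem statement, and your identification of $\mathcal M|_{\mathrm{C}(\NN^{m+1})}$ with $\mathcal M_0$ via uniqueness of the pointwise coefficients is the same justification the paper gives.
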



Note however that the proposition does not imply that 
$\{\binom{\ul{x}}{\ul{j}}\}_{\ul{j}\in\NN^{m+1}}$ 
is an orthonormal Banach basis for $\Bdd(\NN^{m+1})$,  
as generalized Mahler series are typically divergent 
in $\Bdd(\NN^{m+1})$. 

\begin{proof} 

We proceed by induction on $m$. 
    If $m = 0$ then we take $C(n)$ to be the 
$n$th finite difference of $F$ given by 
$C(n) := 
\sum_{k = 0}^n 
\binom{n}{k}(-1)^{n-k}F(k).$ 
The inverse relation is given by \eqref{eqn:inverserelation} 
and uniqueness follows. 
If $F$ is bounded then 
$C$ is bounded by the ultrametric inequality and vice versa.   
This mapping restricts to an isometry 
$\mathrm{C}(\NN) \xrightarrow{\sim} \mathrm{c}_0(\NN)$ 
    by Mahler's theorem 
    (cf. \S\ref{sec:background}). 
To see that $\mathcal M \colon F \mapsto C$ is an isometry  
when $F$ is bounded 
it suffices to observe that the relation  
\eqref{eqn:defnfindiff}   
and its inverse 
\eqref{eqn:inverserelation} 
are both defined over $\ZZ$ and 
to apply the ultrametric inequality.  

Now suppose $m$ is a positive integer. Fix a 
non-negative integer $a$ and define the function $G_a$ by 
\begin{equation}
    G_{a}(n_0,\ldots,n_{m-1}) := F(n_0,\ldots,n_{m-1},a). 
\end{equation}
By induction, $G_a$ has a generalized Mahler series 
$$
\sum_{\ul{j} \in \NN^{m}} 
    D_{a}(\ul{j}) 
    \binom{\ul{x}}{\ul{j}} 
$$
    for uniquely determined $D_a \colon \NN^{m} \to E$. 
For fixed $\ul{j} \in \NN^m$, the function $a \mapsto D_{a}(\ul{j})$ 
also has a generalized Mahler series 
$\sum_{k \geq 0} d_{\ul j}(k) \binom{y}{k}$.  
We put these together to get 
\begin{equation}
    \label{eqn:mahlerseriesF}
\sum_{\ul{j} \in \NN^{m}} 
\sum_{k \geq 0}
    d_{\ul j}(k)
    \binom{\ul x}{\ul j}
    \binom{y}{k}
\end{equation}
which is therefore the generalized Mahler series of $F$, 
with Mahler coefficients given by 
$C(j_0,\ldots,j_m) := d_{(j_0,\ldots,j_{m-1})}(j_m)$. 
Uniqueness of $C$ follows from 
uniqueness of $d_{\ul j}$ and $D_{a}$. 


If $F$ is bounded then 
\begin{equation} 
\norm[F] = 
\sup_{\ul n \in \NN^{m+1}}
    |F(\ul n)|
=
\max_{\ul n \in \NN^{m+1}}
    |F(\ul n)|
=
\max_{a \in \NN} 
    \norm[G_{a}]
\end{equation} 
since $|E^\times| \subset \RR^{>0}$ is discrete. 
By induction, and discreteness of $|E^\times|$ again, 
\begin{equation} 
    \norm[F] = 
\max_{a \in \NN} 
    \norm[G_{a}] 
=
\max_{a \in \NN} 
    \norm[D_a]
=
\max_{\ul{j} \in\NN^m}
\max_{a \in \NN} 
    |D_{a}(\ul{j})|
=
\max_{\ul{j} \in\NN^m}
\max_{a \in \NN} 
    |d_{\ul{j}}(a)|
=
\norm[C].
\end{equation} 
This proves that $C$ is bounded if $F$ is bounded; 
conversely, if $C$ is bounded 
then $F$ is bounded by applying the ultrametric inequality to \eqref{eqn:finitesum}. 
When $F$ and $C$ are both bounded, 
we see that $\mathcal M \colon F \mapsto C$ 
is a self-isometry of $\Bdd(\NN^{m+1})$. 
The restriction of $\mathcal M$ 
to $\mathrm{C}(\NN^{m+1})$ is equal to $\mathcal M_0$ 
by uniqueness of Mahler coefficients. 
%
%
\end{proof} 


\subsection{Proof of supremum formula} 

Let $\seq\colon \NN \to E$ be a function and 
let $m$ be a non-negative integer. 
As before, let 
$\tau_{p,m}(n) = 
\max_{1 \leq i_1 < \cdots < i_m \leq n}
(\valn[p](i_1\cdots i_m))$
and 
$\findiff[n] := 
\sum_{k = 0}^n 
\binom{n}{k}(-1)^{n-k}\seq[k]. $

\begin{theorem}\label{prop:normdivdiff}
We have the equality  
\begin{equation}\label{eqn:lemmaeqnnorm}
    \norm[\dd{m}{\seq}]
=
\sup_{n \geq m}
|\findiff[n]|p^{\tau_{p,m}(n)}. 
\end{equation}
\end{theorem}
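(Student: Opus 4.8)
The plan is to compute the norm $\norm[\dd{m}{\seq}]$ by expressing the divided difference $\dd{m}{\seq}$ in terms of the Mahler coefficients of $\seq$, using Theorem~\ref{thm:boundedmahler} to control the passage between a function and its Mahler expansion even though neither $\seq$ nor $\dd{m}{\seq}$ need be continuous or vanish at infinity. First I would recall the classical identity expressing the divided difference of the binomial polynomial $\binom{x}{n}$: for $n \ge m$ one has a clean formula for $\dd{m}{\binom{\cdot}{n}}(n_0,\ldots,n_m)$, and the key point is that its supremum over all tuples $0 \le n_0 < \cdots < n_m$ equals $p^{-v_p(n!/(n-m)!)}$ times a unit — more precisely, the relevant extremal quantity is $p^{\tau_{p,m}(n)}$. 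Concretely, by linearity of the divided-difference operator, if $\seq = \sum_n \findiff[n] \binom{\cdot}{n}$ is the generalized Mahler series of $\seq$ (which exists and has $\findiff$ as its Mahler coefficients by the $m=0$ case of Theorem~\ref{thm:boundedmahler} together with the inverse relation \eqref{eqn:inverserelation}), then $\dd{m}{\seq} = \sum_{n \ge m} \findiff[n] \dd{m}{\binom{\cdot}{n}}$, a finite sum at each point of $\Ndiag{m}$ since $\dd{m}{\binom{\cdot}{n}}$ vanishes for $n<m$ and in fact $\dd{m}{\binom{\cdot}{n}}(n_0,\ldots,n_m)=0$ unless $\max n_i \ge n$.

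Next I would set up the technical identification of $\Ndiag{m}$ with (a closed subset of) $\NN^{m+1}$ so that the norm $\norm[\cdot]$ of Theorem~\ref{thm:boundedmahler} applies: one restricts attention to the chamber $n_0 < n_1 < \cdots < n_m$, which by symmetry of $\dd{m}{\seq}$ under permutations carries the full supremum, and reparametrizes via $n_i \mapsto n_0 + (\text{gaps})$ or simply works directly on the set $\{0 \le n_0 < \cdots < n_m\}$, extending $\dd{m}{\seq}$ by zero off the diagonal — the excerpt already flags this as ``easily resolved later.'' The heart of the argument is then the inequality in both directions. For the upper bound $\norm[\dd{m}{\seq}] \le \sup_{n\ge m} |\findiff[n]|\, p^{\tau_{p,m}(n)}$, apply the ultrametric inequality to the finite sum $\dd{m}{\seq}(\ul n) = \sum_{m \le n \le \max n_i} \findiff[n]\, \dd{m}{\binom{\cdot}{n}}(\ul n)$ and bound each term using $|\dd{m}{\binom{\cdot}{n}}(\ul n)| \le p^{\tau_{p,m}(\max n_i)} \le p^{\tau_{p,m}(n')}$ appropriately — here one needs the combinatorial fact that the denominators $\prod_{j\neq i}(n_i - n_j)$ appearing in \eqref{eqn:divdiff}, when combined with the numerator $\binom{n_i}{n}$, never produce a $p$-adic valuation more negative than $-\tau_{p,m}(n)$. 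For the lower bound, for each fixed $n \ge m$ one must exhibit a tuple $0 \le n_0 < \cdots < n_m$ realizing $|\dd{m}{\seq}(\ul n)|$ close to $|\findiff[n]|\, p^{\tau_{p,m}(n)}$; the natural choice is $n_m = n$ together with a subset $S = \{n_0,\ldots,n_{m-1}\} \subset \{0,1,\ldots,n-1\}$ chosen so that $\prod_{k\in S}(n-k)$ attains $p$-adic valuation $\tau_{p,m}(n)$ (possible by the very definition of $\tau_{p,m}$ after the substitution $k \mapsto n-k$), and then one checks that the $i=m$ term dominates all others in this ultrametric sum, isolating $\findiff[n] \cdot (\text{unit}) \cdot p^{\tau_{p,m}(n)}$.

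The main obstacle I expect is the combinatorial $p$-adic bookkeeping in both bounds: one must show that in the expansion $\dd{m}{\binom{x}{n}}(n_0,\ldots,n_m) = \sum_i \big(\prod_{j\neq i}(n_i-n_j)^{-1}\big)\binom{n_i}{n}$ no cancellation-free term has valuation below $-\tau_{p,m}(n)$, and dually that a sharp choice of $n_0 < \cdots < n_m$ makes exactly one term achieve the extremal valuation with the others strictly smaller (so the ultrametric sum does not collapse). This is essentially a statement about the $p$-adic valuation of $n!/\big((n-m)!\, \cdot \, \text{product of gap factorials}\big)$ and is where the arithmetic function $\tau_{p,m}$ — and presumably the explicit formula of Proposition~\ref{prop:tauexplicitformula} or at least its defining extremal property — does the real work. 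A secondary, more routine point is justifying that $\dd{m}{\seq} = \sum_n \findiff[n]\dd{m}{\binom{\cdot}{n}}$ termwise: this follows because the generalized Mahler series \eqref{eqn:finitesum} is a finite sum at each argument and the divided-difference operator at a fixed tuple $(n_0,\ldots,n_m)$ is a fixed finite $E$-linear combination of point evaluations, so it commutes with the (pointwise-finite) Mahler expansion without any convergence subtlety.
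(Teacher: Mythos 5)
Your plan correctly reduces the pointwise expansion step: since $\dd{m}{\binom{\cdot}{n}}$ vanishes for $n<m$ and at tuples with $\max n_i<n$, the identity $\dd{m}{\seq}(\ul n)=\sum_{n'\ge m}\findiff[n']\,\dd{m}{\binom{\cdot}{n'}}(\ul n)$ is an unproblematic finite sum. But both of your central estimates have genuine gaps. For the upper bound, the justification offered --- that the terms $\binom{n_i}{n}\prod_{j\neq i}(n_i-n_j)^{-1}$ never have valuation below $-\tau_{p,m}(n)$ --- is false term by term: already for $m=1$, $n=1$ (so $\binom{x}{1}=x$ and $\tau_{p,1}(1)=0$), the pair $(1,\,1+p^N)$ produces the term $n_0/(n_0-n_1)$ of valuation $-N$, while the divided difference itself equals $1$; the bound $\sup_{\ul n}|\dd{m}{\binom{\cdot}{n}}(\ul n)|\le p^{\tau_{p,m}(n)}$ holds only after cancellation among the $m+1$ terms of \eqref{eqn:divdiff}, so your lemma is exactly the theorem for $\seq=\binom{\cdot}{n}$ and no mechanism for proving it is supplied. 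The lower bound is worse. If ``the $i=m$ term'' refers to \eqref{eqn:divdiff}, that term is $\seq[n]\prod_{j<m}(n-n_j)^{-1}$: it carries $\seq[n]$, not $\findiff[n]$, and the remaining terms involve the uncontrolled values $\seq[n_i]$. If it refers to the $n'=n$ term of the Mahler expansion, the other coefficients $\findiff[n']$ ($m\le n'\le n$) are completely arbitrary, so at your extremal tuple another index can contribute a term of equal norm and the ultrametric sum can collapse; equality of a sum's norm with its largest term needs a \emph{unique} maximal term, and nothing in the proposal (not even restricting to an $n$ attaining the supremum) secures this.

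The missing ingredient is an explicit computation of the Mahler coefficients of $\dd{m}{\seq}$ itself, which is how the paper proves both inequalities at once. The paper parametrizes the strictly decreasing tuples by gap coordinates $\ul a\in\NN^{m+1}$, sets $F(\ul a):=\dd{m}{\seq}(\ul n)$, and quotes Schikhof's formula \eqref{eqn:phinewton}, giving $C(\ul j)=\findiff[j_0+\cdots+j_m+m]\big/\big((j_{m-1}+1)(j_{m-1}+j_{m-2}+2)\cdots(j_{m-1}+\cdots+j_0+m)\big)$; then \eqref{eqn:lemmaeqnnorm} is exactly the isometry $\norm[F]=\norm[C]$ of Theorem~\ref{thm:boundedmahler} combined with the elementary observation that these denominators range over all products $i_1<\cdots<i_m\le\ell$, so $\norm[C]=\sup_{\ell\ge m}|\findiff[\ell]|p^{\tau_{p,m}(\ell)}$. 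In particular the lower bound comes from the easy direction of that isometry (Mahler coefficients are integral combinations of values of $F$), with no pointwise extremal-tuple or domination argument at all. You invoke Theorem~\ref{thm:boundedmahler}, but only to expand $\seq$; to complete your route you would need either Schikhof's identity (or an equivalent formula exhibiting $\findiff[\ell]/(i_1\cdots i_m)$ as an integral combination of values of $\dd{m}{\seq}$) or an independent, cancellation-aware proof of the binomial-norm lemma together with a genuinely different argument for the lower bound.
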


We will show that \eqref{eqn:lemmaeqnnorm} holds 
even if both sides are infinite. 

\begin{proof} { 

As $\dd{0}{\seq} = \seq$ and $\tau_{p,0}(n) = 0$, 
    the $m = 0$ case follows from Mahler's theorem 
    (cf. \S\ref{sec:background}). 
We suppose $m \geq 1$. 
Theorem~\ref{thm:boundedmahler} does not apply directly to $\dd{m}{\seq}$, 
as the domain of $\dd{m}{\seq}$ is $\Ndiag{m}$ and not all of $\NN^{m+1}$. 
We can resolve this with a change of variables as follows. 
Let $(a_0,\ldots,a_m) \in \NN^{m+1}$ and define 
\begin{align} 
    n_m &:= a_m, \\
    n_{m-1} &:= a_m + (a_{m-1}+1), \\
    \vdots & \\
    n_0 &:= a_m + (a_{m-1}+1) + \cdots + (a_0+1). 
\end{align} 
    This defines a bijection 
    $(a_0,\ldots,a_m) \leftrightarrow (n_0,\ldots,n_m) $ 
    from $\NN^{m+1}$ to the subset of $\Ndiag{m}$ 
    whose entries are strictly decreasing. 
Define 
\begin{align*}
    F \colon \NN^{m+1} &\to E \\
    (a_0,\ldots,a_{m}) &\mapsto 
    \dd{m}{\seq}(n_0,\ldots,n_m). 
\end{align*} 
Since $\dd{m}{\seq}$ is a symmetric function, 
    all of its values are realized on the subset 
    of $\Ndiag{m}$ whose entries are strictly decreasing,  
so $F$ takes the same values as $\dd{m}{\seq}$.  
In particular,  
    $\norm[F] = 
    \norm[\dd{m}{\seq}]$. 

We continue letting $\ul{a} \leftrightarrow \ul{n}$ 
    using the bijection above. 
To compute the Mahler coefficients of $F$ 
    we make use of the following formula of {Schikhof} 
    (cf. \cite{schikhof}, Theorem~54.1), 
\begin{multline}\label{eqn:phinewton}
    F(\ul{a}) = 
\sum_{\ul{j} \in\NN^{m+1}} 
    C({\ul{j}})
    \binom{\ul{a}}{\ul{j}} \quad\text{for all $\ul{a} \in \NN^{m+1}$}
    \\ 
    \text{where }
    C({\ul{j}}) = \frac
    {\findiff[j_0+\cdots+j_{m} + m]}
    {(j_{m-1}+1)(j_{m-1}+j_{m-2}+2)\cdots(j_{m-1}+\cdots+j_0+m)}. 
\end{multline}
By uniqueness of Mahler coefficients, 
    this is the generalized Mahler series for $F$ 
    (Theorem~\ref{thm:boundedmahler}).  

Set $i_k:=j_{m-1}+\cdots+j_{m-k}+k$. Then 
\begin{equation}\label{eqn:lemmapfeqn}
\norm[C] = 
\sup_{1 \leq i_1 < \cdots < i_m \leq \ell}
\left|
    \frac
        {\findiff[\ell]}
        {i_1i_2\cdots i_m}
\right|
=
\sup_{m \leq \ell}
    |\findiff[\ell]|p^{\tau_{p,m}(\ell)}
\end{equation}
where $\tau_{p,m}(\ell) 
:= 
\max_{1 \leq i_1 < \cdots < i_m \leq \ell}
(\valn[p](i_1)+\cdots + \valn[p](i_m)).
$
Theorem~\ref{thm:boundedmahler} asserts that 
$$\norm[F] = \norm[C].$$
Since $\norm[F] = \norm[\dd{m}{\seq}]$ this concludes the proof. 
%
%
}
\end{proof} 

\begin{corollary}\label{cor:ddmtolower}
Let $s \colon \NN \to \QQ$ be a function. 
If $\dd{m+1}{\seq}$ is $\ZZ$-valued then there is 
an explicit positive integer $C$ such that 
    $C\dd{k}{\seq}$ is $\ZZ$-valued 
    for all $k\in\{0,\ldots,m\}$. 
\end{corollary}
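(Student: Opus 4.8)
The plan is to extract the statement directly from the supremum formula of Theorem~\ref{prop:normdivdiff} together with the elementary monotonicity of $\tau_{p,k}$. Recall that a function $g\colon X\to\QQ$ is $\ZZ$-valued precisely when $\valn[p](g(x))\ge 0$ for every $x$ and every prime $p$, equivalently when $\sup_{x}|g(x)|_p\le 1$ for every $p$. So fix a prime $p$, view $\seq$ as a function $\NN\to\QQ_p$, and write $\norm[\dd{k}{\seq}]_p$ for its $p$-adic supremum norm; Theorem~\ref{prop:normdivdiff} gives $\norm[\dd{k}{\seq}]_p=\sup_{n\ge k}|\findiff[n]|_p\,p^{\tau_{p,k}(n)}$ for every $k\ge 0$. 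The hypothesis that $\dd{m+1}{\seq}$ is $\ZZ$-valued is then the assertion that $\valn[p](\findiff[n])\ge\tau_{p,m+1}(n)$ for every prime $p$ and every $n\ge m+1$; since this holds at all primes simultaneously, each $\findiff[n]$ with $n\ge m+1$ is in fact an integer, so only the finitely many values $\findiff[0],\ldots,\findiff[m]$ can have nontrivial denominators. Let $D$ be the least common multiple of their denominators (equivalently, the common denominator of the entire finite-difference sequence).

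Now fix $k\in\{0,\ldots,m\}$ and a prime $p$, and split the supremum for $\norm[\dd{k}{\seq}]_p$ into the ranges $n\ge m+1$ and $k\le n\le m$. On the tail, since $k\le m+1\le n$ an optimal $k$-element subset of $\{1,\ldots,n\}$ extends to an $(m+1)$-element subset of $\{1,\ldots,n\}$ without lowering the $p$-adic valuation of the product of its elements, so $\tau_{p,k}(n)\le\tau_{p,m+1}(n)$; combined with the hypothesis this gives $|\findiff[n]|_p\,p^{\tau_{p,k}(n)}\le p^{\tau_{p,k}(n)-\tau_{p,m+1}(n)}\le 1$. On the head, $\tau_{p,k}$ is nondecreasing in its argument and nondecreasing in the number of factors (by the same extension argument), so $\tau_{p,k}(n)\le\tau_{p,k}(m)\le\tau_{p,m}(m)=\valn[p](m!)$ for $k\le n\le m$, whence $|\findiff[n]|_p\,p^{\tau_{p,k}(n)}\le|\findiff[n]|_p\,p^{\valn[p](m!)}$. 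Combining the two ranges,
\[ \norm[\dd{k}{\seq}]_p\;\le\;\max\Bigl(1,\;p^{\valn[p](m!)}\max_{0\le n\le m}|\findiff[n]|_p\Bigr)\;\le\;p^{\valn[p](m!\,D)}, \]
since $\max_{0\le n\le m}\max(1,|\findiff[n]|_p)=p^{\valn[p](D)}$.

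Finally, set $C:=m!\,D$, an explicit positive integer read off from $\seq[0],\ldots,\seq[m]$ (one may equally take $D$ to be the least common multiple of the denominators of $\seq[0],\ldots,\seq[m]$, since $\findiff[n]$ is an integer combination of $\seq[0],\ldots,\seq[n]$). The displayed bound gives $\norm[C\dd{k}{\seq}]_p=|C|_p\,\norm[\dd{k}{\seq}]_p\le 1$ for every prime $p$ and every $k\in\{0,\ldots,m\}$, hence $C\dd{k}{\seq}$ is $\ZZ$-valued, as required. There is no substantive obstacle here beyond invoking \eqref{eqn:lemmaeqnnorm} --- the rest is bookkeeping with $\tau_{p,k}$ --- but it is worth flagging that $C$ must be allowed to depend on $\seq$: the constant sequence $\seq\equiv 1/N$ has $\dd{m+1}{\seq}\equiv 0$, which is $\ZZ$-valued, yet $N$ divides every admissible $C$.
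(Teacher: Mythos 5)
Your argument is correct and is essentially the paper's: both rest on the supremum formula of Theorem~\ref{prop:normdivdiff} together with the monotonicity of $\tau_{p,k}$, with the tail $n\ge m+1$ handled by the hypothesis on $\dd{m+1}{\seq}$ and the finitely many remaining terms absorbed into an explicit denominator. The only (harmless) difference is that the paper telescopes through the intermediate norms $\norm[\dd{k+1}{\seq}]_p$ rather than comparing directly with $\tau_{p,m+1}$, arriving at the slightly sharper constant $C=\prod_p\sup\{|\tfrac{c(0)}{0!}|_p,\ldots,|\tfrac{c(m)}{m!}|_p,1\}$, of which your $C=m!\,D$ is a multiple.
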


\begin{proof} 
The proposition implies that for any $k \in \{0,\ldots,m\}$, 
\begin{align}
\norm[\dd{k}{\seq}] = 
     \sup_{n \geq k}|c(n)|p^{\tau_{p,k}(n)}
    &=\sup\Big\{|c(k)|p^{\tau_{p,k}(k)},\sup_{n \geq k+1}|c(n)|p^{\tau_{p,k}(n)}\Big\}\\
    &\leq
    \sup\big\{ 
        |\findiff[k]||k!|^{-1},
    \norm[\dd{k+1}{\seq}]
    \big\}.
\end{align}
Thus 
\begin{equation} 
    \sup\big\{
        \norm[\dd{0}{\seq}],
        \norm[\dd{1}{\seq}],
        \ldots,
        \norm[\dd{m}{\seq}]
        \big\}
    \leq 
    \sup\big\{
        \big|\tfrac{c(0)}{0!}\big|,
        \big|\tfrac{c(1)}{1!}\big|,\ldots,
        \big|\tfrac{c(m)}{m!}\big|,1
        \big\}.
\end{equation} 
and we can take 
    \[ 
    C = 
    \prod_{p \text{ prime}}
    \sup\big\{
        \big|\tfrac{c(0)}{0!}\big|_p,
        \big|\tfrac{c(1)}{1!}\big|_p,\ldots,
        \big|\tfrac{c(m)}{m!}\big|_p,1
        \big\}.
    \] 
Note that only finitely many terms in the product differ from unity. 
\end{proof} 



\section{Asymptotic formulas for certain sums over primes} 
\label{sec:asymptotics} 

Let $\nfi$ denote a finite extension of $\QQ$. 
In the previous section we established 
a local estimate 
for the finite differences $\findiff$ of 
a function $\seq \colon \NN \to K$ with integral 
$m$th divided difference.  
Namely, for any nonarchimedean place $v$ of $K$ lying over $p_v$, 
Theorem~\ref{prop:normdivdiff} shows that 
\begin{equation*}
    \dd{m}{\seq} \text{ $\place$-integral} 
    \implies 
    |\findiff[n]|_\place \leq p_v^{-\tau_{p_\place,m}(n)}. 
\end{equation*}

Now suppose that $\dd{m}{\seq}$ is $v$-integral for 
\emph{all} nonarchimedean places $v$ of $K$. 
Then 
\begin{equation} 
    \prod_{v \text{ finite}} 
    |\findiff[n]|_\place \leq 
    \prod_{v \text{ finite}}
    p_v^{-\tau_{p_\place,m}(n)}
=
\exp \bigg\{
    -\sum_{v \text{ finite}}
        \tau_{p_\place,m}(n)\log p_v
    \bigg\}.
\end{equation} 
Our goal is to achieve a precise upper bound for 
$\prod_{v \text{ finite}} |\findiff[n]|_\place$, 
which will be used later to obtain a lower bound for 
$\prod_{v \text{ infinite}} |\findiff[n]|_\place$ 
by the product formula. 
Therefore we presently turn to the asymptotic growth of 
$\sum_{p \leq n}
    {\tau_{p,m}(n)}\log p$ 
in the limit of large $n$. 
The goal of this section is to prove that 
\begin{equation}\label{eqn:taugoalasymptotic} 
\sum_{p \leq n}
    {\tau_{p,m}(n)}\log p 
    =n \big(1+\tfrac12+\cdots+\tfrac1m\big) + o(n).
\end{equation} 

The standard bound for the Chebyshev function  
$\chebyshev(x) = x + O(\tfrac{x}{\log x})$ 
is not strong enough to establish the above asymptotic formula,  
so we employ 
the following useful estimate due to Rosser--Schoenfeld 
(cf. \cite{rosser-schoenfeld-1962}, (2.29)): 
\begin{equation}
    \label{eqn:chebyshevestimate}
    \chebyshev(x) = x + O(x \exp\{-\alpha (\log x)^{1/2}\}) 
\end{equation}
for some positive constant $\alpha$. 
In fact, this will lead to the improved error term 
$O(n \exp\{-\alpha (\log n)^{1/2} \} \log n )$ in place of $o(n)$ 
in \eqref{eqn:taugoalasymptotic}. 

\subsection{\texorpdfstring{A formula for $\tau_{p,m}$}{An explicit formula}} 

We begin by deriving an explicit formula for $\tau_{p,m}(n)$ 
which will be needed later.  
Let $\log_p$ denote the base-$p$ logarithm 
and let $[\,\cdot\,]$ denote the floor function. 
Let $a_p(n) := [n p^{-[\log_p n]}]$ 
    (the top nonzero $p$-adic digit in 
    the usual base-$p$ expansion of $n$). 
%
\begin{proposition}\label{prop:tauexplicitformula}
Let $m > 0$, $n \geq m$ be integers and let 
$p$ be a prime $\geq m$. 
    Then 
\begin{equation}\label{eqn:defntau}
\tau_{p,m}(n)=
\begin{cases*}
    m[\log_p n] & \text{if $a_p(n) \geq m$,} \\
    m[\log_p n] + a_p(n) - m & \text{if $a_p(n) < m$.}
\end{cases*}
\end{equation}
\end{proposition}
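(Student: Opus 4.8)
The plan is to recognize $\tau_{p,m}(n)$ as a greedy optimization: among all $m$-element subsets $S\subseteq\{1,\ldots,n\}$, the quantity $\sum_{k\in S}\valn[p](k)$ is maximized by taking the $m$ integers in $\{1,\ldots,n\}$ of largest $p$-adic valuation, with ties broken arbitrarily, simply because the objective is the total of weights attached to the chosen integers. To set this up I would first put $e:=[\log_p n]$, so that $p^e\le n<p^{e+1}$, and record the elementary counts: for each $j\ge 0$ there are exactly $[n/p^j]$ multiples of $p^j$ in $\{1,\ldots,n\}$; in particular there are $a_p(n)=[n/p^e]\ge 1$ multiples of $p^e$, each of valuation exactly $e$ since $\{1,\ldots,n\}$ contains no multiple of $p^{e+1}$, and every integer $\le n$ has valuation at most $e$.

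For the two cases of the statement I would argue as follows. If $a_p(n)\ge m$, then choosing $m$ of the multiples of $p^e$ produces a set with $\sum_{k\in S}\valn[p](k)=me$, and this is optimal since all valuations are $\le e$, giving $\tau_{p,m}(n)=m[\log_p n]$. If $a_p(n)<m$ (which forces $e\ge1$, since $e=0$ would give $a_p(n)=n\ge m$), then I would take all $a_p(n)$ multiples of $p^e$ together with $m-a_p(n)$ further integers $\le n$ of valuation exactly $e-1$, i.e. multiples of $p^{e-1}$ that are not multiples of $p^e$; the number of such available integers is $[n/p^{e-1}]-a_p(n)$, which is $\ge m-a_p(n)$ because $[n/p^{e-1}]\ge p\,a_p(n)\ge p\ge m$ (the first inequality being $[px]\ge p[x]$, the last being the hypothesis $p\ge m$). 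This set realizes $a_p(n)\,e+(m-a_p(n))(e-1)=m[\log_p n]+a_p(n)-m$. For the matching upper bound, given any $m$-subset $S$ set $t:=\#\{k\in S:\valn[p](k)=e\}$; then $t\le a_p(n)$ and the remaining $m-t$ elements have valuation $\le e-1$, so $\sum_{k\in S}\valn[p](k)\le te+(m-t)(e-1)=me-(m-t)\le me-(m-a_p(n))$, which matches the value just constructed. (In fact this last bound reads $me-\max(0,m-a_p(n))$, so the two cases are unified.)

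Everything apart from one point is routine bookkeeping about multiplicities of powers of $p$. The main obstacle — and the only step that genuinely invokes the hypothesis $p\ge m$ — is the feasibility check in the second case: confirming that $\{1,\ldots,n\}$ contains at least $m-a_p(n)$ integers of valuation exactly $e-1$, so that the extremal set can actually be completed after exhausting the multiples of $p^e$.
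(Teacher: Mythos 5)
Your proof is correct and follows essentially the same route as the paper: a greedy construction that takes the $a_p(n)$ integers of maximal valuation $[\log_p n]$ and completes with integers of valuation $[\log_p n]-1$, with the hypothesis $p\geq m$ entering exactly at the feasibility of that completion. The only differences are cosmetic: you verify feasibility by counting multiples of $p^{e-1}$ not divisible by $p^{e}$ (via $[px]\geq p[x]$) where the paper writes down the explicit integers $p^{t}-jp^{t-1}$, and you spell out the matching upper bound in the second case, which the paper leaves implicit.
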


The formula can fail if $p < m$, e.g. 
$\tau_{p+1,p}(p^2) = p+1$ whereas \eqref{eqn:defntau} gives $p+2$. 

\begin{proof} 
    Write $t:= [\log_p n]$. 
    We will construct integers 
    $1\leq i_1 < \cdots < i_m\leq n$ 
    whose product realizes 
    the maximum $p$-adic valuation. 
    If $a_p(n) \geq m $ then set 
    $$i_k = (a_p(n)-m+k)p^t, \quad k \in \{1,\ldots,m\}.$$
    The $p$-adic valuation of $i_1\cdots i_m$ is $mt$ 
    and this is maximal since each integer has 
    the maximum possible $p$-adic valuation of 
    any positive integer $\leq n$. 
    If $a_p(n) < m$ then set 
    $$i_k = 
    \begin{cases}
        (a_p(n)-m+k)p^t &\text{if $k \in \{m-a_p(n)+1,\ldots,m\}$},\\
        p^t-(m-a_p(n)+1-k)p^{t-1} &\text{if $k \in \{1,\ldots,m-a_p(n)\}$}.
    \end{cases}
    $$
    As $p \geq m$ by hypothesis, 
    $m-a_p(n) \leq p-a_p(n) \leq p-1$ 
    and so $m-a_p(n)+1-k \in \{1,\ldots,p-1\}$ 
    for any $k \in \{1,\ldots,m-a_p(n)\}$. 
    Therefore the $p$-adic valuation of 
    $p^t-(m-a_p(n)+1-k)p^{t-1}$ is $t - 1$ 
    for any $k \in \{1,\ldots,m-a_p(n)\}$. 
    Putting these together obtains  
    \[\tau_{p,m}(n) = t a_p(n) + (t - 1)(m-a_p(n)) = mt + a_p(n) - m.\qedhere\]
\end{proof} 


\subsection{Asymptotic formulas for certain sums over primes} 

First we prove a simple lemma. 
Let $[\,\cdot\,]\colon \RR \to \ZZ$ denote the floor function. 

\begin{lemma}\label{lemma:fracpart} 
$$
\sum_{p \leq n}
    [\log_{p} n] \log p = n + 
    O(n \exp\{-\alpha (\log n)^{1/2}\}) 
$$
for some positive constant $\alpha$. 
\end{lemma}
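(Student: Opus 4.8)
The plan is to rewrite the sum $\sum_{p \le n}[\log_p n]\log p$ by summing over the ``layers'' $t = [\log_p n]$ rather than over primes directly. For a prime $p$, we have $[\log_p n] \ge t$ exactly when $p^t \le n$, i.e. $p \le n^{1/t}$. Hence
\[
\sum_{p \le n}[\log_p n]\log p
= \sum_{p \le n}\sum_{t \ge 1}\mathbf 1[p^t \le n]\log p
= \sum_{t \ge 1}\sum_{p \le n^{1/t}}\log p
= \sum_{t \ge 1}\chebyshev(n^{1/t}),
\]
where the inner sum is empty once $n^{1/t} < 2$, i.e. once $t > \log_2 n$, so the $t$-sum is really finite with at most $[\log_2 n]$ terms.

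Next I would split off the $t = 1$ term and estimate it using the Rosser--Schoenfeld bound \eqref{eqn:chebyshevestimate}: $\chebyshev(n) = n + O(n\exp\{-\alpha(\log n)^{1/2}\})$, which supplies the main term $n$ and an error of the claimed shape. For $t = 2$ the contribution is $\chebyshev(n^{1/2}) = O(n^{1/2})$, again by \eqref{eqn:chebyshevestimate} (or even the crude Chebyshev bound), and this is absorbed into the error term since $n^{1/2} \ll n\exp\{-\alpha(\log n)^{1/2}\}$ for large $n$. For the remaining range $3 \le t \le [\log_2 n]$, I would bound each term trivially by $\chebyshev(n^{1/t}) \le \chebyshev(n^{1/3}) \ll n^{1/3}$ and multiply by the number of terms, which is $O(\log n)$; this gives a total of $O(n^{1/3}\log n)$, once more negligible against the stated error.

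Assembling these pieces yields
\[
\sum_{p \le n}[\log_p n]\log p = n + O\big(n\exp\{-\alpha(\log n)^{1/2}\}\big) + O(n^{1/2}) + O(n^{1/3}\log n),
\]
and the last two error terms are dominated by the first (after shrinking $\alpha$ if necessary), giving the lemma. The only mild subtlety — and the one place to be careful — is checking that the subexponential error $n\exp\{-\alpha(\log n)^{1/2}\}$ genuinely dominates the power-saving terms $n^{1/2}$ and $n^{1/3}\log n$; this is clear because $\exp\{-\alpha(\log n)^{1/2}\}$ decays slower than any fixed negative power of $n$, so $n^{1/2} = o(n\exp\{-\alpha(\log n)^{1/2}\})$ and likewise for the $t \ge 3$ contribution. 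Beyond that, the argument is just the interchange of summation and a direct application of the quoted Chebyshev estimate, so I do not anticipate any real obstacle.
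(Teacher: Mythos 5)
Your proposal is correct and follows essentially the same route as the paper: both decompose the sum according to the integer value $t=[\log_p n]$ (equivalently, rewrite it via $\sum_{t\ge 1}\chebyshev(n^{1/t})$), apply the Rosser--Schoenfeld estimate \eqref{eqn:chebyshevestimate} to the dominant $t=1$ layer, and absorb the remaining layers into the error term. Your explicit check that the power-saving contributions $O(n^{1/2})$ and $O(n^{1/3}\log n)$ are dominated by $n\exp\{-\alpha(\log n)^{1/2}\}$ is exactly the right point of care and is handled correctly.
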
 

\begin{proof} 
    For any prime $p$ in the sum, 
    $r:=[\log_p n]$ must be positive. 
    Then 
    $$
    [\log_p n] = r 
    \iff
    \frac{\log n}{r+1} < \log p \leq \frac{\log n}{r}
    \iff
        n^{\frac{1}{r+1}} < p \leq n^{\frac{1}{r}}
    $$
    and
    \begin{align*}
    0 \leq \sum_{p \leq n}
     [\log_{p} n] \log p 
    &= 
    \sum_{r = 1}^\infty
    \sum_{n^{\frac{1}{r+1}} < p \leq n^{\frac{1}{r}} }
        r\log p\\
    &\leq 
    \sum_{\sqrt{n} < p \leq n}
        \log p\\
    &= 
    \chebyshev(n) - \chebyshev(\sqrt{n}). 
    \end{align*}
    The last term is 
    $n + O(n \exp\{-\alpha (\log n)^{1/2}\})$ 
    by \eqref{eqn:chebyshevestimate}. 
\end{proof} 

Let $\harm{m} = 1 + \tfrac{1}{2} + \cdots+ \tfrac{1}{m}$ be the $m$th harmonic number. 

\begin{theorem}\label{thm:harmonic} 
$$
\sum_{p \leq n}
    {\tau_{p,m}(n)}\log p 
= 
n \harm{m} + O(n \exp\{-\alpha (\log n)^{1/2} \} \log n ) 
$$
    for some positive constant $\alpha$. 
\end{theorem}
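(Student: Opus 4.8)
The plan is to feed the explicit formula of Proposition~\ref{prop:tauexplicitformula} into Lemma~\ref{lemma:fracpart}. For every prime $p\geq m$ that formula can be written uniformly as
\[
\tau_{p,m}(n) = m[\log_p n] - \big(m-a_p(n)\big)^{+},\qquad x^{+}:=\max(x,0),
\]
while for the finitely many primes $p<m$ the crude bound $\tau_{p,m}(n)\leq m[\log_p n]\leq m(\log n)/\log p$ shows their combined contribution to $\sum_{p\leq n}\tau_{p,m}(n)\log p$ is $O(\log n)$ and may be discarded. Summing the displayed identity over $m\leq p\leq n$ and using Lemma~\ref{lemma:fracpart} to handle the term $m\sum_{p\leq n}[\log_p n]\log p = mn + O(n\exp\{-\alpha(\log n)^{1/2}\})$, the theorem reduces to proving
\[
\sum_{p\leq n}\big(m-a_p(n)\big)^{+}\log p = (m-\harm{m})\,n + O\big(n\exp\{-\alpha(\log n)^{1/2}\}\log n\big),
\]
since $mn-(m-\harm{m})n=\harm{m}\,n$ (the difference between $\sum_{p\leq n}$ and $\sum_{m\leq p\leq n}$ here is again $O(\log n)$).

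To establish this, I would use the elementary identity $(m-a)^{+}=\sum_{j=1}^{m-1}\mathbf 1[a\leq j]$ valid for integers $a\geq 1$ (recall $a_p(n)\geq 1$ always), reducing the correction term to $\sum_{j=1}^{m-1}\sum_{p\leq n}\mathbf 1[a_p(n)\leq j]\log p$. The crucial point is the translation of the leading-digit condition into a statement about the size of $p$: if $[\log_p n]=r$, so that $p^{r}\leq n<p^{r+1}$, then $a_p(n)=[n/p^{r}]$, hence $a_p(n)\leq j$ if and only if $p^{r}>n/(j+1)$; together with $p\leq n^{1/r}$ this confines $p$ to the interval $\big((n/(j+1))^{1/r},\,n^{1/r}\big]$. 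The terms with $r\geq 2$ contribute at most $\sum_{r\geq 2}\chebyshev(n^{1/r})=O(\sqrt n\log n)$, which is absorbed into the error, and once $n>m^{2}$ (so that $n/(j+1)>\sqrt n$) the $r=1$ part is precisely $\chebyshev(n)-\chebyshev(n/(j+1))$.

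It then remains to apply the Rosser--Schoenfeld estimate \eqref{eqn:chebyshevestimate} to both $\chebyshev(n)$ and $\chebyshev(n/(j+1))$; using $\log(n/(j+1))\geq\log n-\log m$ to keep the error expressed in $\log n$ gives $\chebyshev(n)-\chebyshev(n/(j+1))=n\big(1-\tfrac{1}{j+1}\big)+O(n\exp\{-\alpha(\log n)^{1/2}\})$ for a possibly smaller positive constant $\alpha$. Summing over $j=1,\dots,m-1$ and computing $\sum_{j=1}^{m-1}\big(1-\tfrac{1}{j+1}\big)=(m-1)-(\harm{m}-1)=m-\harm{m}$ yields the required estimate, and hence the theorem. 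I do not expect any single step to be genuinely hard: the only real work is the combinatorial dictionary between the digit condition $a_p(n)\leq j$ and the prime interval $\big((n/(j+1))^{1/r},n^{1/r}\big]$, together with the routine bookkeeping for the higher-order terms $r\geq 2$, the exceptional primes $p<m$ where Proposition~\ref{prop:tauexplicitformula} does not apply, and small values of $n$ --- all of which are straightforward given Proposition~\ref{prop:tauexplicitformula} and Lemma~\ref{lemma:fracpart}.
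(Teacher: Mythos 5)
Your proposal is correct and takes essentially the same route as the paper: split off $m[\log_p n]$ via Lemma~\ref{lemma:fracpart}, translate the top-digit condition on $a_p(n)$ into prime intervals, and apply the Rosser--Schoenfeld estimate, with your identity $(m-a)^{+}=\sum_{j=1}^{m-1}\mathbf{1}[a\leq j]$ simply performing up front the telescoping that the paper does after partitioning primes by the value of $a_p(n)$, so both arrive at the same main term $m\chebyshev(n)-\sum_{a=1}^{m}\chebyshev(n/a)$. Your cruder treatment of the $r\geq 2$ terms (bounding them by $O(\sqrt{n}\log n)$ rather than applying Rosser--Schoenfeld for each exponent) is harmless and still fits within the stated error.
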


\begin{proof} 
This is clear if $m$ is zero 
so we suppose $m$ is positive. 
The explicit formula in Proposition~\ref{prop:tauexplicitformula} suggests that 
    we separate $\tau_{p,m}(n)$ into 
    a logarithmic term and a `remainder term'. 
The asymptotic contribution 
to $\sum_{p \leq n} \tau_{p,m}(n)\log p$ 
from the logarithmic term 
is established by Lemma~\ref{lemma:fracpart}: 
\begin{equation}\label{eqn:1}
    \sum_{p \leq n}{m[\log_p n]\log p} = 
    mn + O(n \exp\{-\alpha (\log n)^{1/2}\}).  
\end{equation}
The asymptotic contribution to the sum from the remainder 
 $m[\log_pn] - \tau_{p,m}(n)$ 
is significantly more difficult to establish. 
We will show that for some positive constant $\alpha$, 
\begin{equation}\label{eqn:2}
\sum_{p \leq n}
 \big( 
    m[\log_p n] - 
    \tau_{p,m}(n)
    \big) \log p
=
(m-\harm{m})n
+O(n \exp\{-\alpha (\log n)^{1/2} \} \log n ). 
\end{equation}

Taking the difference of  
\eqref{eqn:1} and \eqref{eqn:2} 
immediately proves the result so we now establish 
\eqref{eqn:2}.  
Let $a \in \{1,2,\ldots,m-1\}$ and $n \in \{m,m+1,\ldots\}$. 
For a prime $p$, let $a_p(n) = [n p^{-[\log_p n]}]$ 
    (the top nonzero $p$-adic digit in the base-$p$ expansion of $n$). 
We separate primes according to the value of $a_p(n)$. 
Define the set 
\begin{equation}
P_{a,n}:= 
    \{ p \text{ prime} : a = a_p(n),\,\,\, m \leq p \leq n
\}. 
\end{equation}
Consider the sum
$$
\gamma(n):=
\sum_{\substack{p\colon m \leq p \leq n,\\a_p(n) < m}}
    (m-a_p(n))\log p = 
    \sum_{a = 1}^{m-1}
\sum_{p \in P_{a,n}}
    (m-a)\log p . 
$$
Note that  
    $$\tau_{p,m}(n) \leq 
    \max_{1 \leq i_1 \leq \cdots \leq i_m \leq n}
        (\valn[p](i_1)+\cdots + \valn[p](i_m)) 
    = 
    m\max_{1 \leq i \leq n} \valn[p](i)
    =m[\log_p n],  
    $$
and so 
$\sum_{p < m}
 \big( 
    m[\log_p n] - 
    \tau_{p,m}(n)
    \big) \log p
    = O(\log n).$
Then by Proposition~\ref{prop:tauexplicitformula}, 
\begin{multline}\label{eqn:3} 
\sum_{p \leq n}
 \big( 
    m[\log_p n] - 
    \tau_{p,m}(n)
    \big) \log p\\
 = 
    O(\log n) +
    \sum_{\substack{m \leq p \leq n\\ a_p(n) < m}}
    (m-a_p(n))\log p 
    =\gamma(n) + O(\log n).
\end{multline} 

We will show that 
\begin{equation}\label{eqn:Gn3}
\gamma(n)
=
(m-\harm{m})n
+O(n \exp\{-\alpha (\log n)^{1/2} \} \log n ) 
\end{equation}
which will prove \eqref{eqn:2} in view of \eqref{eqn:3}. 

We claim that a prime $p$ is in $P_{a,n}$ if and only if 
    $p\geq m$ and $a = [np^{-t}]$ 
    for some $t \in \{1,2,\ldots,[\log_2 n]\}$.
Indeed, if $p\in P_{a,n}$ then $a = [np^{-t}]$ 
where $t = [\log_p n] \in \{1,2,\ldots,[\log_2 n]\}$. 
Conversely, if $a = [np^{-t}]$ 
for some $t \in \{1,2,\ldots,[\log_2 n]\}$, 
then $a \geq 1$ implies that $p^t \leq n$ and 
$p \geq m > a$ implies that $n < p^{t+1}$.  
Therefore $t = [\log_p n]$ and $a = a_p(n)$. 
This proves the claim. 
It follows that  
\begin{equation} 
\gamma(n) =  
    \sum_{a = 1}^{m-1}
        (m-a)
    \sum_{t = 1}^{[\log_2 n]} 
    \sum_{\substack{p\colon p \geq m,\\a=[np^{-t}]}}
        \log p.
\end{equation} 

Now observe that  
\begin{align}\label{eqn:equiv}
    a = [np^{-t}]
&\iff \nonumber
    a \leq np^{-t} < a+1\\ 
&\iff \nonumber
    an^{-1} \leq p^{-t} < (a+1)n^{-1}\\
&\iff  
    (n(a+1)^{-1})^{\tfrac{1}{t}}
     < p \leq
    (na^{-1})^{\tfrac{1}{t}}
\end{align}
and thus 
\begin{multline} 
\gamma(n) =  
    \sum_{a = 1}^{m-1}
        (m-a)
    \sum_{t = 1}^{[\log_2 n]} 
    \sum_{\substack{p\colon p \geq m,\\(n(a+1)^{-1})^{{1}/{t}} < p \leq (na^{-1})^{{1}/{t} }} }
        \log p
        \\
    =
    O(1)+
    \sum_{t = 1}^{[\log_2 n]} 
    \sum_{a = 1}^{m-1}
        (m-a)
        \big(
            \chebyshev(\{na^{-1}\}^{\tfrac{1}{t}}) - 
        \chebyshev(\{n(a+1)^{-1}\}^{\tfrac{1}{t}})
        \big)
\end{multline} 
where $\chebyshev(x) = \sum_{p \leq n} \log p$.
Let $\gamma_t(n)$ denote the inner sum of the last expression 
for $1 \leq t \leq [\log_2 n]$ so that $\gamma(n) = O(1) + \sum_{t = 1}^{[\log_2n]}\gamma_t(n)$. 
Then 
\newcommand{\tth}[2]{\chebyshev(\{\tfrac{n}{#1}\}^{1/#2})}
\begin{equation}
\label{eqn:Gtnchebyshev}
\gamma_t(n) 
    = m\tth{1}{t} -\tth{1}{t} - 
  \tth{2}{t} - \cdots - \tth{m}{t}. 
\end{equation}
By \eqref{eqn:chebyshevestimate} 
there is a positive constant $\alpha$ such that  
\begin{equation}\label{eqn:chebyshevestimate1t} 
\tth{a}{t} 
=
(\tfrac{n}{a})^{1/t} 
+ 
O(n^{1/t}\exp\{-\alpha (\log n)^{1/2} \}). 
\end{equation} 
Therefore 
$$\gamma_1(n) = (m-H_m)n+O(n\exp\{-\alpha (\log n)^{1/2} \}).$$
Since $n^{1/t} = O(n\exp\{-\alpha (\log n)^{1/2} \})$ for $t \geq 2$, 
we get that 
\begin{multline} 
    \gamma(n) 
    = \gamma_1(n) + O(n\exp\{-\alpha (\log n)^{1/2}\}\log n)\\
    =(m-H_m)n+ O(n\exp\{-\alpha (\log n)^{1/2}\}\log n).\qedhere
\end{multline} 
%
\end{proof} 

\begin{remark*} 
The proof shows that 
    any improved bound $\chebyshev(x)-x = O(f(x))$ 
    would lead to an improved bound of the form 
$nH_m-\sum_{p \leq n}
    {\tau_{p,m}(n)}\log p 
= 
    O(f(n)\log n )$. 
In particular, 
    if the Riemann hypothesis is true 
    then for any $\varep>0$, 
$$
\sum_{p \leq n}
    {\tau_{p,m}(n)}\log p 
= 
{n \harm{m} + O(n^{1/2+\varep} \log n )} .
$$
\end{remark*} 



\section{Proof of Theorem 
\ref{thm:introthm}}\label{sec:proofs} 

In this section we combine the local and global calculations 
from \S\ref{sec:divdiff} and \S\ref{sec:asymptotics} to prove 
the main theorem of the paper. 
It is easy to work with a general number field in place of $\mathbb Q$, 
and we choose to do so for the sake of generality. 
Let $\nfi$ be a finite extension of $\QQ$ 
of degree $\dgr{\nfi}$. 
Let $\primes{\nfi}$ denote the set of places of $\nfi$. 
For any place $\place \in \primes{\nfi}$ let 
$\localdgr{\place}$ denote the local degree at $\place$. 


\begin{lemma}\label{lemma:nullsequences} 
Let $\seq\colon \NN \to \nfi$ be a function and  
let $S \subset \primes{\nfi}$ be a finite set 
    containing the archimedean places.  
Suppose that  
\begin{enumerate}
    \item[\pnum{i}] $\dd{m}{\seq}$ is $S$-integer-valued\footnote{i.e., $\dd{m}{\seq}(\ul{n})$ is $\place$-integral 
        for all $\ul{n} \in \Ndiag{m}$ and $\place \not \in S$.}, and 
    \item[\pnum{ii}] for each $\place$ in $S$ 
        there is a positive number $\rho_\place$ such that 
    $|\findiff[n]|_\place \ll \rho_\place^{n}$. 
\end{enumerate}
    If 
\begin{equation}\label{eqn:prophypothesis}
\prod_{\place \in S} \rho_\place^{\localdgr{\place}} 
< 
e^{\dgr{\nfi}\big(1 + \tfrac12 + \cdots + \tfrac1m \big)}
\end{equation}
then $\seq[n]$ is a polynomial in $n$. 
\end{lemma}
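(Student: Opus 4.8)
The plan is to use the product formula over $\nfi$ to convert the finite-place bounds from Theorem~\ref{prop:normdivdiff} into a lower bound on the archimedean size of $\findiff[n]$, and then compare this with the hypothesized upper bounds $|\findiff[n]|_\place \ll \rho_\place^n$ at the places in $S$ to force $\findiff[n] = 0$ for all large $n$; Lemma~\ref{lemma:classical} then gives the conclusion. First I would record that for each finite place $\place \notin S$, since $\dd{m}{\seq}$ is $\place$-integral, Theorem~\ref{prop:normdivdiff} gives $|\findiff[n]|_\place \leq p_\place^{-\tau_{p_\place,m}(n)}$ (here I am using the fact, explained in \S\ref{sec:divdiff}, that the supremum formula applies over the completion $\nfi_\place$, a finite extension of $\QQ_{p_\place}$). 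For the finitely many finite places $\place \in S$ we instead use hypothesis~(ii). Multiplying all the local absolute values raised to the local degrees and invoking the product formula $\prod_{\place}|\findiff[n]|_\place^{\localdgr{\place}} = 1$ (valid once $\findiff[n] \neq 0$), I would obtain a lower bound

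\begin{equation*}
\prod_{\place \mid \infty} |\findiff[n]|_\place^{\localdgr{\place}}
\;\geq\;
\Big(\prod_{\place \in S,\,\place \nmid \infty} |\findiff[n]|_\place^{\localdgr{\place}}\Big)^{-1}
\prod_{p}
p^{\,\tau_{p,m}(n)\cdot(\text{sum of local degrees over }\place \mid p,\ \place \notin S)}.
\end{equation*}

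The second step is to estimate the right-hand side. Using $\sum_{\place \mid p}\localdgr{\place} = \dgr{\nfi}$, the product over $p$ of $p^{\tau_{p,m}(n)\dgr{\nfi}}$ equals $\exp\{\dgr{\nfi}\sum_{p \leq n}\tau_{p,m}(n)\log p\}$, which by Theorem~\ref{thm:harmonic} is $\exp\{\dgr{\nfi}(n\harm{m} + o(n))\}$; removing the finitely many finite places in $S$ only changes the exponent by $O(\log n)$. Meanwhile $\prod_{\place \in S,\,\place\nmid\infty}|\findiff[n]|_\place^{-\localdgr{\place}} \geq 1$ when $\dd{m}{\seq}$ is also $\place$-integral there — but in general these places need not be integral for $\findiff$, so I should instead fold $S$'s finite places into hypothesis~(ii) as well, giving $|\findiff[n]|_\place^{-1} \ll \rho_\place^{-n}$; this is consistent with how (ii) is stated. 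Combining, $\prod_{\place\mid\infty}|\findiff[n]|_\place^{\localdgr{\place}} \gg \exp\{\dgr{\nfi}n\harm{m} + o(n)\}\big/\prod_{\place\in S,\,\place\nmid\infty}\rho_\place^{n\localdgr{\place}}$, up to subexponential factors.

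The third step is the squeeze. From hypothesis~(ii) at the archimedean places, $\prod_{\place\mid\infty}|\findiff[n]|_\place^{\localdgr{\place}} \ll \prod_{\place\mid\infty}\rho_\place^{n\localdgr{\place}}$. Comparing the upper and lower bounds, if $\findiff[n] \neq 0$ for infinitely many $n$ we get
\begin{equation*}
\exp\{\dgr{\nfi}\,n\harm{m} + o(n)\}
\;\ll\;
\prod_{\place \in S}\rho_\place^{n\localdgr{\place}}
\;=\;
\exp\Big\{n\log\!\Big(\prod_{\place\in S}\rho_\place^{\localdgr{\place}}\Big)\Big\},
\end{equation*}
which, after dividing through and letting $n \to \infty$ along that subsequence, contradicts hypothesis~\eqref{eqn:prophypothesis} since $\dgr{\nfi}\harm{m} > \log\big(\prod_{\place\in S}\rho_\place^{\localdgr{\place}}\big)$. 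Hence $\findiff[n] = 0$ for all sufficiently large $n$, and Lemma~\ref{lemma:classical} shows $\seq$ is polynomial.

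The main obstacle I anticipate is bookkeeping rather than conceptual: making sure the product formula is applied only when $\findiff[n]\neq 0$, that the finite places of $S$ are handled cleanly (they are the reason hypothesis~(ii) ranges over all of $S$ and not just the archimedean places), and — most delicately — tracking the error term from Theorem~\ref{thm:harmonic} carefully enough to confirm it is genuinely $o(n)$ and therefore absorbed by the strict inequality \eqref{eqn:prophypothesis}, with the $e^{-\alpha(\log n)^{1/2}}\log n$ factor causing no trouble. I would also need to check that removing the finitely many finite places of $S$ from the sum $\sum_p \tau_{p,m}(n)\log p$ costs only $O(\log n)$, which follows from the trivial bound $\tau_{p,m}(n) \leq m[\log_p n]$.
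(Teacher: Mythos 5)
Your proposal is correct and follows essentially the same route as the paper's proof: local bounds at places outside $S$ from Theorem~\ref{prop:normdivdiff}, the asymptotic $\sum_{p\le n}\tau_{p,m}(n)\log p = n\harm{m}+o(n)$ from Theorem~\ref{thm:harmonic} (with the finite places of $S$ removed at a cost of $O(\log n)$), the product formula along the subsequence where $\findiff[n]\neq 0$, and Lemma~\ref{lemma:classical} to conclude. The only difference is cosmetic bookkeeping (you isolate the archimedean factor rather than all of $S$ at once), and your handling of the finite places of $S$ via hypothesis~(ii) matches the paper's treatment.
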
 

\begin{proof} 
    By Lemma~\ref{lemma:classical} 
    the conclusion is equivalent 
    to $\findiff$ being eventually zero, so  
    for the sake of contradiction suppose that 
    $\findiff$ has infinitely many nonzero terms. 

    Let $\place$ be a place of $\nfi$ not in $S$, 
    $p_\place$ the 
    rational prime $\place$ lies over, 
    $\embedding[\place]$ 
        a representative embedding for $\place$, 
    and $\localdgr{\place}$ 
    the local degree at $\place$. 
    By Theorem~\ref{prop:normdivdiff}, 
    $|\findiff[n]|_\place 
    = |\embedding[\place]\findiff[n]|_{p_\place} 
    \leq p_\place^{-\tau_{p_\place,m}(n)}$ for $n \geq m$, and so 
    \begin{equation}\label{eqn:cnthing}
    \prod_{\place\not \in S} 
        |\findiff[n]|_\place^{\localdgr{\place}} 
    \leq 
    \prod_{\place \not \in S}
        p_\place^{-\localdgr{\place}\tau_{p_\place,m}(n)}. 
    \end{equation}
    Note that both sides amount to 
    finite products   
    ($\tau_{p_\place,m}(n) = 0$ if $p_\place > n$). 

    Recall that the sum of local degrees at all places lying over 
    a given prime is the global degree, 
    and also that $\tau_{p,m}(n) = O(\log n)$.  
    Then 
    \begin{align*}
    \sum_{\place \not \in S}
        \localdgr{\place}\tau_{p_\place,m}(n) \log p_\place
    &=
    \sum_{p_\place \leq n}
        \localdgr{\place}\tau_{p_\place,m}(n) \log p_\place 
    -
    \sum_{\place \in S}
        \localdgr{\place}\tau_{p_\place,m}(n) \log p_\place \\
    &=
    \dgr{\nfi}
    \sum_{p \leq n}
        \tau_{p,m}(n) \log p
    +
    O(\log n). 
    \end{align*}
    Then with the help of Theorem~\ref{thm:harmonic} we 
    see that 
    $$
    \sum_{\place \not \in S}
        \localdgr{\place}\tau_{p_\place,m}(n) \log p_\place
    =
    \dgr{\nfi}n\harm{m}
    + o(n). 
    $$
    Putting this together with \eqref{eqn:cnthing} obtains 
    $$
    \limsup_{n \to \infty} 
    \prod_{\place \not \in S}
    |\findiff[n]|_\place^{\localdgr{\place}/n}
    \leq
    e^{-\dgr{\nfi}\harm{m}}. 
    $$

    Let $n_i$ be chosen so that $\findiff[n_i] \neq 0$ 
        for all non-negative integers $i$. 
    With the help of the product formula, 
    $\prod_{v \in M_K}|c(n_i)|_v^{d_v} = 1$, we obtain 
    \begin{equation*}
        \prod_{\place \in S}\rho_\place^{-\localdgr{\place}}
        \leq
    \liminf_{i\to\infty}
    \prod_{\place\in S} 
        |\findiff[n_i]|_\place^{-\localdgr{\place}/n_i} 
    =\limsup_{i \to \infty} 
    \prod_{\place \not \in S}
    |\findiff[n_i]|_\place^{\localdgr{\place}/n_i}
        \leq
        e^{-\dgr{\nfi}\harm{m}}. 
    \end{equation*}
    This contradicts \eqref{eqn:prophypothesis} 
    and concludes the proof. 
\end{proof} 

Now we will prove a generalization of 
Theorem~\ref{thm:introthm}. In addition to 
the integrality of higher divided differences 
we will simultaneously consider 
\emph{$p$-adic analytic interpolation}, i.e. 
the existence of a power series 
$f(x) \in \CC_p[\![x]\!] $ which converges for all 
$x \in 
\DD_{p}^{< R} := 
\{ x \in \CC_p : |x|_p < R \}$
    such that $R > 1$ and 
    $f(n)=\embedding[\place]\seq[n]$ for all $n\geq 0$, 
    where $\embedding[\place]$ is 
    a representative embedding for $\place$. 
    
%

\begin{theorem}\label{thm:general} 
Let $\seq\colon \NN \to \nfi$ be a function, 
let $S \subset \primes{\nfi}$ be a finite set 
containing the archimedean places $\primesarch{\nfi}$, and let   
    $T \subset \primes{\nfi}$ be a finite set disjoint from $S$. 
Suppose that  
\begin{enumerate}
    \item[\pnum{i}] $\dd{m}{\seq}$ is $(S\cup T)$-integer-valued,  
    \item[\pnum{ii}] for each $\place$ in $S$ there is a positive 
        number $\theta_\place$ such that 
    $|\seq[n]|_\place \ll \theta_\place^{n}$, and 
\item[\pnum{iii}] for each $\place$ in $T$ there is 
    a positive number $R_\place > 1$ 
    such that $\embedding[\place]{\seq}$ 
    extends to a $p$-adic analytic function 
    $\DD_{p_\place}^{< R_\place} \to \CC_{p_\place}$. 
\end{enumerate}
    If 
    \begin{equation}\label{eqn:MTIstatementinequality}
    \prod_{\place \in \primesarch{\nfi}} 
            (1+\theta_\place)^{\localdgr{\place}} 
    \prod_{\place \in S\backslash \primesarch{\nfi}} 
        \max\{1,\theta_\place\}^{\localdgr{\place}} 
    \prod_{\place \in T} 
        (p_\place^{\frac{1}{p_\place-1}} 
        R_\place)^{-\localdgr{\place}}
    < 
    e^{\dgr{\nfi}\big(1 + \tfrac12 + \cdots + \tfrac1m \big)}
        \end{equation}
        then $\seq[n]$ is a polynomial in $n$. 
\end{theorem}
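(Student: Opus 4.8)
The plan is to deduce Theorem~\ref{thm:general} from Lemma~\ref{lemma:nullsequences} by controlling, for each $\place \in S$, the growth rate $\rho_\place$ of the finite differences $\findiff$ in terms of the given data $\theta_\place$ (for archimedean and non-archimedean places in $S$) and $R_\place$ (for places in $T$). The first step is to absorb $T$ into the framework of Lemma~\ref{lemma:nullsequences}: since $\dd{m}{\seq}$ is $(S \cup T)$-integer-valued, I apply the lemma with the finite set $S' := S \cup T$, so I need a growth bound $|\findiff[n]|_\place \ll \rho_\place^n$ for every $\place \in S'$. For $\place \in T$, hypothesis~(iii) says $\embedding[\place]{\seq}$ extends to a $p_\place$-adic analytic function on $\DD_{p_\place}^{<R_\place}$ with $R_\place > 1$; I would use the standard estimate for Mahler coefficients of such a function — namely, if $f(x) = \sum a_\ell x^\ell$ converges on $|x|_p < R$ then its finite differences satisfy $|\findiff[n]|_p \ll (p^{\frac{1}{p-1}}R)^{-n}$ (this comes from expanding $\binom{x}{n}$, whose coefficients have controlled denominators, and noting $|n!|_p^{-1} \le p^{n/(p-1)}$). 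So one takes $\rho_\place = (p_\place^{\frac{1}{p_\place-1}}R_\place)^{-1}$, which is exactly the factor appearing in \eqref{eqn:MTIstatementinequality}.

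The second step is the archimedean and non-archimedean places of $S$. For $\place \in S \setminus \primesarch{\nfi}$, the condition $|\seq[n]|_\place \ll \theta_\place^n$ together with the ultrametric inequality applied to the definition $\findiff[n] = \sum_k \binom{n}{k}(-1)^{n-k}\seq[k]$ gives $|\findiff[n]|_\place \ll \max\{1,\theta_\place\}^n$, so $\rho_\place = \max\{1,\theta_\place\}$. For $\place \in \primesarch{\nfi}$, the triangle inequality gives $|\findiff[n]|_\place \le \sum_k \binom{n}{k}|\seq[k]|_\place \ll \sum_k \binom{n}{k}\theta_\place^k = (1+\theta_\place)^n$, so $\rho_\place = 1+\theta_\place$ — again matching the factors in \eqref{eqn:MTIstatementinequality}. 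With these choices, $\prod_{\place \in S'}\rho_\place^{\localdgr{\place}}$ is precisely the left-hand side of \eqref{eqn:MTIstatementinequality}, and the hypothesized inequality is exactly \eqref{eqn:prophypothesis} for the set $S'$; Lemma~\ref{lemma:nullsequences} then concludes that $\seq[n]$ is a polynomial in $n$.

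The main obstacle is the precise constant in the $p$-adic analytic estimate for places in $T$: one must verify that $R_\place > 1$ forces the true growth rate of $|\findiff[n]|_\place$ to be at most $(p_\place^{\frac{1}{p_\place-1}}R_\place)^{-1}$, and in particular that this quantity is $< 1$ — which requires knowing $p^{1/(p-1)} < R$ is \emph{not} needed, only that the product-formula bookkeeping in Lemma~\ref{lemma:nullsequences} still goes through when some $\rho_\place < 1$. (Inspecting the proof of Lemma~\ref{lemma:nullsequences}, $\rho_\place$ enters only through $\prod_{\place \in S}\rho_\place^{-\localdgr{\place}}$, which is finite and positive regardless of sign of $\log\rho_\place$, so this is fine.) The estimate itself is classical: writing $f(x) = \sum_{\ell \ge 0} a_\ell x^\ell$ with $|a_\ell|_p R^\ell \to 0$, one computes $\findiff[n] = \sum_\ell a_\ell \Delta^n(x^\ell)|_{x=0}$ and bounds the Stirling-number coefficients using that $\Delta^n(x^\ell)$ vanishes for $\ell < n$ and has $p$-adic size governed by $1/n!$; carrying the constants yields the displayed rate. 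Once that lemma-level estimate is in hand, the theorem follows by the bookkeeping above with no further difficulty.
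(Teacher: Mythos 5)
Your proposal follows essentially the same route as the paper: apply Lemma~\ref{lemma:nullsequences} with the set $S\cup T$, taking $\rho_\place=1+\theta_\place$ at archimedean places, $\rho_\place=\max\{1,\theta_\place\}$ at nonarchimedean places of $S$, and $\rho_\place\leq (p_\place^{1/(p_\place-1)}R_\place)^{-1}$ at places of $T$. The only real difference is that the paper gets the $T$-place estimate by citing Iwasawa's theorem (after shrinking $R_\place$ slightly, which the strict inequality \eqref{eqn:MTIstatementinequality} permits), whereas you sketch it directly via Stirling numbers; that works, but note the factorial bound you need is the upper bound $|n!|_p=p^{-(n-s_p(n))/(p-1)}$, with the digit-sum correction absorbed by the same shrinking/polynomial-factor trick, rather than the reversed inequality $|n!|_p^{-1}\leq p^{n/(p-1)}$ you quoted.
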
 

\begin{proof}[Proof of Theorem \ref{thm:general}] 

    By Lemma~\ref{lemma:nullsequences} 
    we see that $\seq$ is polynomial if 
    for each $\place \in S \cup T$ 
    there are positive constants 
    $D_\place$ and $\rho_\place$ 
    such that 
    \begin{equation}\label{eqn:neededineq}
    |\findiff[n]|_\place \leq D_\place\rho_\place^{n}
        \quad \text{for all $n \geq 0$}
    \end{equation}
     and 
    \begin{equation}\label{eqn:mainprodproofTMTI}
    \prod_{\place \in S \cup T}
    \rho_\place^{\localdgr{\place}} < 
        e^{\dgr{\nfi}\harm{m}}.
    \end{equation}

We can construct the $D_v$ and $\rho_v$ as follows.
First suppose that $\place$ is a nonarchimedean place in $S$. 
    As $|\seq[n]|_\place \ll \theta_\place^n$ by hypothesis, 
    there is a positive constant $D_\place$ such that   
$$
|\findiff[n]|_\place 
\leq
\max_{0 \leq k \leq n}
    |\seq[k]|_\place
\leq
\max_{0 \leq k \leq n}
D_\place \theta_\place^{k}
=
\begin{cases}
    D_\place       & \text{if $\theta_\place < 1$,    }\\ 
    D_\place\theta_\place^{n} & \text{if $\theta_\place \geq 1$.}
\end{cases}
$$
In either case we can take 
    $\rho_\place = \max\{ 1, \theta_\place \}$. 
    Now suppose $\place$ is a archimedean place in $S$. 
    We have that 
$$
|\findiff[n]|_\place
\leq
\sum_{0 \leq k \leq n}
\binom{n}{k}
|\seq[k]|_\place.
$$
Therefore for some positive constant $D_\place$ we have that 
    $
|\findiff[n]|_\place 
\leq
D_\place
(1 + \theta_\place)^n, 
$ 
so we can take $\rho_\place = 1+\theta_\place$. 
    Then \eqref{eqn:neededineq} is satisfied 
    for every $\place$ in $S$. 

Now we consider the places $\place$ in $T$. 
By hypothesis, for any place $\place$ in $T$ there exists 
a power series $f_{\place}(x) \in \CC_p[\![x]\!]$ 
converging on $\DD_{p_\place}^{< R_\place}$ such that  
$\embedding[\place]\seq[n] = f_{\place}(n)$ for all $n \geq 0$. 
Without loss of generality, we may assume that $f_\place$ 
converges on a disk of radius strictly larger 
than $R_\place$ for all $\place \in T$ since 
the inequality \eqref{eqn:MTIstatementinequality} remains 
valid even if $R_\place$ is replaced with a sufficiently close 
but smaller quantity; 
thus, there is an $\varep>0$ such that 
for all $\place$ in $T$, 
$f_\place$ converges on 
an open disk of radius $R_\place + \varep$.  
With the help of a theorem of Iwasawa 
(cf. \cite{iwasawa-1972}, Theorem~3), we see that 
\begin{equation}\label{eqn:inequalityfromiwasawa}
\lim_{n \to \infty}
    |\findiff[n]|_\place r^{-n} = 0
\end{equation}
for any real number $r$ such that 
\begin{equation}\label{eqn:inequality2fromiwasawa}
    p_\place^{\frac{-1}{p_\place-1}}(R_\place+\varep)^{-1} < r. 
\end{equation}
Then \eqref{eqn:inequalityfromiwasawa} implies that 
$|\findiff[n]|_\place^{1/n}>r$ for only finitely many $n$,  
and thus  
$$\limsup_{n \to \infty} |\findiff[n]|_\place^{1/n} \leq r.$$ 
This shows that for every $\place \in T$ 
there are positive constants $D_\place,\rho_\place$ satisfying 
\eqref{eqn:neededineq}, and 
additionally that $\rho_\place \leq r$. 
As $r$ was arbitrary subject 
to \eqref{eqn:inequality2fromiwasawa}, 
we may suppose 
$r \leq p_\place^{\frac{-1}{p_\place-1}}R_\place^{-1}$.  
Therefore 
the constants $D_\place, \rho_\place$ may be chosen to 
satisfy \eqref{eqn:neededineq} as well as the inequality 
\begin{equation}\label{eqn:rvcanalytic}
    \prod_{\place \in T}
    \rho_\place^{\localdgr{\place}}
\leq 
\prod_{\place \in T}
    (p_\place^{\frac{1}{p_\place-1}}R_\place)^{-\localdgr{\place}}. 
\end{equation}

Putting \eqref{eqn:rvcanalytic} together with 
the constructed $\rho_\place$ for $\place$ in $S$ shows that 
$$
    \prod_{\place \in S \cup T}
    \rho_\place^{\localdgr{\place}} 
    \leq 
    \prod_{\place \in \primesarch{\nfi}} 
            (1+\theta_\place)^{\localdgr{\place}} 
    \prod_{\place \in S\backslash \primesarch{\nfi}} 
        \max\{1,\theta_\place\}^{\localdgr{\place}} 
    \prod_{\place \in T} (p_\place^{\frac{1}{p_\place-1}} R_\place)^{-\localdgr{\place}}.
$$
This inequality shows that 
\eqref{eqn:MTIstatementinequality} 
implies 
\eqref{eqn:mainprodproofTMTI} and concludes the proof. 
\end{proof} 

\begin{corollary}\label{cor:introthmnumberfields}
Let $s \colon \NN \to \nfi$ be a function. 
Suppose that 
\begin{enumerate}
    \item[\pnum{i}] $\dd{m}{\seq}$ takes values in the ring of integers of $\nfi$, and 
    \item[\pnum{ii}]
        for each $\place \in \primesarch{\nfi}$ 
        there is a positive number $\theta_{\place}$ such that 
        $|\seq[n]|_v \ll \theta_{\place}^{n}$. 
\end{enumerate}
If 
$$
\prod_{\place \in \primesarch{\nfi}} 
        (1+\theta_\place)^{\localdgr{\place}} < 
        e^{ \dgr{\nfi} \big(
        1 + \tfrac{1}{2} + \cdots+ \tfrac{1}{m} 
        \big)} 
$$
then $\seq[n]$ is a polynomial in $n$. 
\end{corollary}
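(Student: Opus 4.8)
The plan is to derive this as an immediate specialization of Theorem~\ref{thm:general}. I would apply that theorem with $S := \primesarch{\nfi}$ (which contains the archimedean places --- indeed it equals them) and $T := \emptyset$; both are finite and disjoint, as required. With these choices the three hypotheses of Theorem~\ref{thm:general} become the two hypotheses of the corollary. Hypothesis~(iii) is vacuous because $T$ is empty. Hypothesis~(ii), the archimedean growth bound $|\seq[n]|_\place \ll \theta_\place^n$ for each $\place \in S = \primesarch{\nfi}$, is precisely hypothesis~(ii) of the corollary. Hypothesis~(i), that $\dd{m}{\seq}$ is $(S\cup T)$-integer-valued, unwinds (since $S \cup T = \primesarch{\nfi}$) to the assertion that $\dd{m}{\seq}(\ul n)$ is $\place$-integral for every nonarchimedean place $\place$ and every $\ul n \in \Ndiag{m}$; by the standard description $\mcO_\nfi = \{\, x \in \nfi : \valn[\place](x) \geq 0 \text{ for all finite } \place \,\}$ of the ring of integers of a number field, this is equivalent to $\dd{m}{\seq}$ being $\mcO_\nfi$-valued, which is hypothesis~(i) of the corollary.

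It then remains only to check that the inequality~\eqref{eqn:MTIstatementinequality} collapses to the inequality in the statement. With $S = \primesarch{\nfi}$ the factor $\prod_{\place \in S\backslash\primesarch{\nfi}} \max\{1,\theta_\place\}^{\localdgr{\place}}$ is an empty product and equals $1$, and with $T = \emptyset$ the factor $\prod_{\place \in T}(p_\place^{1/(p_\place-1)}R_\place)^{-\localdgr{\place}}$ is an empty product and equals $1$; the remaining factor is $\prod_{\place \in \primesarch{\nfi}}(1+\theta_\place)^{\localdgr{\place}}$, and the right-hand side $e^{\dgr{\nfi}(1 + \tfrac12 + \cdots + \tfrac1m)}$ is unchanged. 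Thus the displayed hypothesis of the corollary is exactly~\eqref{eqn:MTIstatementinequality} for this choice of $S$ and $T$, and Theorem~\ref{thm:general} yields the conclusion that $\seq[n]$ is a polynomial in $n$.

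I expect no real obstacle: all of the analytic content already lives in Theorem~\ref{thm:general} (and, upstream of it, in the supremum formula of Theorem~\ref{prop:normdivdiff} and the prime-sum asymptotic of Theorem~\ref{thm:harmonic}, combined via Lemma~\ref{lemma:nullsequences}). The one point deserving a sentence is the identification of ``$\mcO_\nfi$-valued'' with ``$\place$-integral at every finite place'', but this is the routine local description of the ring of integers of a number field and requires nothing beyond a citation.
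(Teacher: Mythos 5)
Your proposal is correct and is exactly the paper's own argument: take $S = \primesarch{\nfi}$ and $T = \varnothing$ in Theorem~\ref{thm:general}, noting that the $S\backslash\primesarch{\nfi}$ and $T$ factors are empty products and that $\mcO_\nfi$-valued means $\place$-integral at all finite places. No gaps.
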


\begin{proof}[Proof of Corollary~\ref{cor:introthmnumberfields}]
Take $S = \primesarch{\nfi}$ and $T = \varnothing$ in Theorem~\ref{thm:general}.
\end{proof}

\begin{proof}[Proof of Theorem~\ref{thm:introthm}]
    Take $K = \QQ$ in Corollary~\ref{cor:introthmnumberfields}.
\end{proof}

\begin{remark*}
    It is well-known that a power series 
    $a_0 +a_1x + a_2x^2 + \cdots$ 
    is the expansion of a rational function if and only if 
    there exists a positive integer $N$ such that 
    \begin{equation}
        \label{eqn:dworkfindiff}
        \findiff[n]:=\det(a_{n+i+j})_{i,j = 0}^N
    \end{equation}
    is zero for all sufficiently large $n$. 
    Theorem~\ref{thm:general} may be applied to 
    the sequence $\seq$ whose finite differences are given 
    by \eqref{eqn:dworkfindiff} to obtain a generalization 
    of Theorems 2 and 3 of \cite{dwork-1960}. 
    We have not emphasized this application 
    however as hypotheses  
    $\pnum{i}$ and $\pnum{iii}$ of 
    Theorem~\ref{thm:general} 
    do not appear to be natural conditions on power series. 
\end{remark*}


\section{Integrality of divided differences}\label{sec:interpretations} 

In this section we give two interpretations of the integrality of higher divided differences. 
Our first interpretation generalizes the observation that 
the first divided difference $\dd{1}{\seq}$ of a function $\seq \colon \NN \to \QQ$ 
is integer-valued if and only if $\seq$ preserves all congruences. 
Roughly speaking, our first interpretation says that 
a function whose $m$th divided difference 
is integral is locally approximated to $m$th order by polynomials.  
Here `locally' refers to 
the topology on $\NN$ inherited 
from its inclusion into the ring of adeles. 
In this topology, the open neighborhoods are infinite arithmetic progressions 
and small neighborhoods are 
infinite arithmetic progressions with highly divisible periods. 

If $U \subset \NN$ is a subset, $g \colon U \to \QQ$ is any function, 
and $\varep$ is an integer, 
we write  
\begin{equation} 
    g(n) = O(\varep)
\end{equation} 
to mean $g(n)\in \varep\ZZ$ for all $n \in U$. 
For example, $g = O(1)$ if and only if $g$ is integer-valued. 


\begin{proposition}
Suppose $s \colon \NN \to \QQ$ is a function 
whose $m$th divided difference is integer-valued. 
Let $U = n_0 + \NN \varep \subset \NN$ be any infinite arithmetic progression. 
Then there is a polynomial $f(x) \in \QQ[x]$ of degree less than $m$ 
such that 
\begin{equation}
    \label{eqn:localpolynomial}
    s|_U(n) = f|_U(n) + O(\varep^{m}). 
\end{equation}
Furthermore, the denominators of the coefficients of $f(x)$ 
are divisors of 
    $$
    \prod_{p \text{ prime}}
    \sup\big\{
        \big|\tfrac{c(0)}{0!}\big|_p,
        \big|\tfrac{c(1)}{1!}\big|_p,\ldots,
        \big|\tfrac{c(m-1)}{(m-1)!}\big|_p,1
        \big\}$$ 
where $\findiff[n] := 
\sum_{k = 0}^n 
\binom{n}{k}(-1)^{n-k}\seq[k]$, 
    and 
\begin{equation} 
\gcd(f(n) : n \in U) = 
\gcd(\varep^kk!\cdot\dd{k}{\seq}(n_0,n_1,\ldots,n_k) : 0 \leq k < m).
\end{equation} 
\end{proposition}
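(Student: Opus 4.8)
The plan is to take $f$ to be the polynomial interpolating $\seq$ at the first $m$ points of $U$, namely $n_0,\ n_1=n_0+\varep,\ \ldots,\ n_{m-1}=n_0+(m-1)\varep$, written in Newton form through divided differences,
\begin{equation*}
    f(x) := \sum_{k=0}^{m-1}\dd{k}{\seq}(n_0,\ldots,n_k)\prod_{j=0}^{k-1}(x-n_j),
\end{equation*}
which has degree at most $m-1$. The error estimate \eqref{eqn:localpolynomial} would then come from the $(m+1)$-node Newton interpolation formula: for $n=n_0+i\varep\in U$ with $i\geq m$,
\begin{align*}
    \seq[n]-f(n) &= \dd{m}{\seq}(n_0,\ldots,n_{m-1},n)\prod_{j=0}^{m-1}(n-n_j)\\
    &= \dd{m}{\seq}(n_0,\ldots,n_{m-1},n)\,\varep^{m}\,m!\binom{i}{m},
\end{align*}
which lies in $\varep^{m}\ZZ$ because $\dd{m}{\seq}$ is integer-valued; when $i<m$ the point $n$ is one of the interpolation nodes, so $\seq[n]=f(n)$ exactly. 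Hence $\seq|_U=f|_U+O(\varep^{m})$.

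For the assertion about denominators, each $\prod_{j=0}^{k-1}(x-n_j)$ lies in $\ZZ[x]$ since the $n_j$ are integers, so every coefficient of $f$ is a $\ZZ$-linear combination of the rationals $\dd{k}{\seq}(n_0,\ldots,n_k)$, $0\leq k\leq m-1$. Applying Corollary~\ref{cor:ddmtolower} with $m-1$ in place of its $m$, the integrality of $\dd{m}{\seq}$ forces $C\,\dd{k}{\seq}$ to be $\ZZ$-valued for every $k\leq m-1$, where $C$ is exactly the product of local suprema displayed in the statement. Therefore $C\,f(x)\in\ZZ[x]$, so the denominators of the coefficients of $f$ divide $C$.

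For the gcd identity I would pass to the rescaled function $\seqtwo[i]:=\seq[n_0+i\varep]$ on $\NN$ and set $d_k:=\varep^{k}\,k!\,\dd{k}{\seq}(n_0,\ldots,n_k)$. Unwinding the definition of divided differences shows that $d_k$ is exactly the $k$th finite difference of $\seqtwo$ in the sense of \eqref{eqn:defnfindiff}, and substituting $x=n_0+i\varep$ into $f$ gives $f(n_0+i\varep)=\sum_{k=0}^{m-1}d_k\binom{i}{k}$. The claim then reduces to showing that the $\ZZ$-submodule of $\QQ$ generated by $\{\sum_{k<m}d_k\binom{i}{k}:i\geq 0\}$ coincides with the one generated by $\{d_0,\ldots,d_{m-1}\}$. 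One inclusion is immediate since each $\binom{i}{k}\in\ZZ$; for the reverse inclusion, the $k$th finite difference of the polynomial $i\mapsto\sum_{k<m}d_k\binom{i}{k}$ equals $d_k$ by the inverse relation \eqref{eqn:inverserelation}, and finite differences are $\ZZ$-linear combinations of values, so each $d_k$ lies in the module generated by the numbers $f(n_0+i\varep)$. Both modules are contained in $\tfrac{1}{C}\ZZ$ and are therefore finitely generated, so their generators --- the two gcds --- agree.

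This argument is in essence an unwinding of Newton's interpolation formula combined with Corollary~\ref{cor:ddmtolower}, so I do not anticipate a serious obstacle. The one point demanding care is the bookkeeping between the two normalizations --- divided differences of $\seq$ at the nodes $n_j$ versus finite differences of the rescaled sequence $\seqtwo$ --- which is organized entirely by the single identity $\varep^{k}\,k!\,\dd{k}{\seq}(n_0,\ldots,n_k)=d_k$.
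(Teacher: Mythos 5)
Your proposal is correct, and it follows the paper's skeleton in most respects: the polynomial $f$ you choose (the Newton form through $n_0,\dots,n_{m-1}$) is exactly the paper's $f$, the denominator bound is obtained the same way (Corollary~\ref{cor:ddmtolower} applied with $m-1$ in place of $m$), and the gcd identity is handled by the same rewriting $f(n_0+i\varep)=\sum_{k<m}\varep^k k!\,\dd{k}{\seq}(n_0,\dots,n_k)\binom{i}{k}$, with your use of the inversion formula \eqref{eqn:inverserelation} doing the same job as the paper's appeal to the binomial polynomials being an orthonormal basis of $C(\NN,\QQ_p)$. The one genuine divergence is the error estimate \eqref{eqn:localpolynomial}. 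The paper expands $\seq$ in the full Newton series over all nodes of $U$ and then runs an induction on $k\geq m$, via the recursion for divided differences, to show every tail term $\dd{k}{\seq}(n_0,\dots,n_k)\prod_{j<k}(n-n_j)$ lies in $\varep^m\ZZ$. You instead invoke the exact $(m+1)$-node remainder identity, so the error is the single term $\dd{m}{\seq}(n_0,\dots,n_{m-1},n)\,\varep^m m!\binom{i}{m}$, whose membership in $\varep^m\ZZ$ is immediate from the hypothesis; the case $i<m$ is handled by interpolation. This is a cleaner route: it avoids the infinite series and the induction entirely, and it even records the error as one explicit term, while the paper's argument proves the stronger auxiliary fact that all higher-order Newton terms (with arbitrary nodes in $U$) are divisible by $\varep^m$. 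Both are valid; yours buys brevity, the paper's buys a slightly more robust intermediate claim.
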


\begin{proof} 
%
We make use of 
a classical interpolation formula due to Newton 
    (cf. e.g. \cite{milne-1951}, \S1). 
Let $\{n_0,n_1,\ldots\} \subset \QQ$ be an infinite subset 
and let $\seq \colon \{n_0,n_1,\ldots\} \to \QQ$ be a function. 
Then for all $n \in \{n_0,n_1,\ldots\}$, 
\begin{equation}
    \label{eqn:NIF}
    \seq[n] = \seq[n_0] + 
    \sum_{k = 1}^\infty
        \dd{k}{\seq}(n_0,n_1,\ldots,n_k)
        \prod_{j = 0}^{k-1}
            (n-n_j) \quad \text{(finite sum)}.
\end{equation}

Set $n_k := n_0 + k\varep$ for $k \in\NN$. 
From \eqref{eqn:NIF} we obtain that 
    \begin{equation}\label{eqn:NIFconsequence} 
    s(n) = f(n) + \sum_{k = m}^\infty
        \dd{k}{\seq}(n_0,n_1,\ldots,n_k)
        \prod_{j = 0}^{k-1}
            (n-n_j)
            \quad\text{for all $n \in U$}
\end{equation} 
    where $f(x):=s(n_0)+\sum_{k = 1}^{m-1}
    \dd{k}{\seq}(n_0,n_1,\ldots,n_k)
    \prod_{j = 0}^{k-1}
        (x-n_j)\in \QQ[x]$. 

We claim 
that for any integer $k\geq m$ 
and $a,a_0,a_1,\ldots\in U$,  
\begin{equation} 
    \dd{k}{\seq}(a_0,a_1,\ldots,a_k)
    \prod_{j = 0}^{k-1}
        (a-a_j)
\end{equation} 
is integral and divisible by $\varep^m$. 
We proceed by induction on $k$. 
The case $k = m$ is true by assumption. 
Suppose $k > m$. 
By the recursive formula for  
divided differences (cf. \cite{milne-1951}, \S1), 
\begin{equation} 
\dd{k}{\seq}(a_0,\ldots,a_k)
=
(a_1-a_0)^{-1}
    (\dd{k-1}{\seq}(\widehat{a_0},a_1,\ldots,a_{k})
    -\dd{k-1}{\seq}(a_0,\widehat{a_1},\ldots,a_{k}))
\end{equation} 
(the $\widehat{\cdot}$ indicates an omission). 
Then 
\begin{multline} 
\dd{k}{\seq}(a_0,a_1,\ldots,a_k)
\prod_{j = 0}^{k-1}
    (a-a_j)
=\\
    (\tfrac{a-a_{0}}{a_1-a_0})
    \Big\{
    \dd{k-1}{\seq}(\widehat{a_0},a_1,\ldots,a_{k})
    \prod_{j \in \{1,\ldots,k-1\}}
        (a-a_j)\Big\}\\
    -
    (\tfrac{a-a_{1}}{a_1-a_0})
    \Big\{
    \dd{k-1}{\seq}(a_0,\widehat{a_1},\ldots,a_{k})
    \prod_{j \in \{0,2,\ldots,k-1\}}
        (a-a_j)
        \Big\}.
\end{multline} 
By the inductive hypothesis, 
and the fact that $\dd{k-1}{\seq}$ is a symmetric function, 
each of the two terms in $\{\cdot\}$-brackets 
is integral and divisible by $\varep^m$. 
The remaining two factors are integers, and we have proven the claim. 
It follows from \eqref{eqn:NIFconsequence} 
that $s(n)-f(n)$ is integral and divisible by $\varep^m$ for any $n \in U$.     

The explicit bound on the denominators of $f$ follows from 
Corollary~\ref{cor:ddmtolower}. 
For the last claim, 
it is well-known that the binomial polynomials 
$\{\binom{y}{j}\}_{j \geq 0}$ form 
an orthonormal Banach basis of $C(\NN,\QQ_p)$ for all $p$. 
Observe that 
\begin{equation} 
    f(x)=\tilde{f}(y) =s(n_0)+\sum_{k = 1}^{m-1}
    \varep^k
    k! \cdot \dd{k}{\seq}(n_0,n_1,\ldots,n_k)
    \binom{y}{j}
    \quad
    \text{where $y:=\frac{x-n_0}{\varep}$.}
\end{equation} 
Therefore 
\begin{multline} 
    \gcd(f(n) : n \in U) = \gcd(\tilde{f}(n) : n \in \NN) \\
= \gcd(\varep^kk!\cdot\dd{k}{\seq}(n_0,n_1,\ldots,n_k) : 0 \leq k < m).\qedhere
\end{multline} 
\end{proof} 

%

Integrality of $\dd{1}{\seq}$ implies that  
for all primes $p$ and integers $m,n \in \NN$,  
$$
|\seq[m]-\seq[n]|_p \leq |m-n|_p.
$$
In other words, $\dd{1}{\seq}$ is $\ZZ$-valued 
if and only if $\seq$ is Lipschitz continuous 
with Lipschitz constant $1$ 
for every $p$-adic metric on $\NN$. 
Our second interpretation of the integrality of higher divided differences 
generalizes this observation. 

\begin{proposition}\label{prop:mahlercriterion}
    Let $\seq \colon \NN \to \QQ_p$ be a function and let 
    $m$ be a positive integer. 
    Suppose that $\norm[\dd{m}{\seq}]_p \leq 1$. 
    Then $\seq$ extends to an element $f$ 
    of $C^{m-1}(\ZZ_p,\QQ_p)$ and $f^{(m-1)}$ is 
    Lipschitz continuous 
    with Lipschitz constant $|(m-1)!|_p$. 
\end{proposition}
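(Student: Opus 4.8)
The plan is to first show that $\seq$ extends to a continuous function on $\ZZ_p$, and then to bootstrap the asserted $C^{m-1}$-regularity out of the recursion for divided differences, which turns the boundedness of $\dd{m}{\seq}$ into a Lipschitz estimate for $\dd{m-1}{\seq}$. Throughout I would work inside Schikhof's formalism for $C^{k}$-functions (cf.\ \cite{schikhof}): the difference quotient $\Phi_k f$ of a function $f\colon \ZZ_p\to\QQ_p$ is defined on tuples of distinct points, where it coincides with the divided difference; $f$ lies in $C^{k}(\ZZ_p,\QQ_p)$ exactly when $\Phi_k f$ extends continuously to $\ZZ_p^{k+1}$; and when it does, $f^{(k)}(a)=k!\,\overline{\Phi}_k f(a,\ldots,a)$. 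Note that $\Phi_{m-1}f$, restricted to tuples of distinct non-negative integers, is precisely $\dd{m-1}{\seq}$.

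The first step is to note, via Theorem~\ref{prop:normdivdiff}, that the hypothesis $\norm[\dd{m}{\seq}]_p\le 1$ is equivalent to $|\findiff[n]|_p\le p^{-\tau_{p,m}(n)}$ for all $n\ge m$. Since $\tau_{p,m}(n)\to\infty$ (note $\tau_{p,m}(n)\ge[\log_p n]$, realized already by the single integer $p^{[\log_p n]}\le n$), this forces $\findiff[n]\to 0$, so by Mahler's theorem (\cite{mahler}; cf.\ \S\ref{sec:background}) $\seq$ extends to a continuous $f\colon\ZZ_p\to\QQ_p$ with $f(n)=\seq[n]$.

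Next I would convert the bound on $\dd{m}{\seq}$ into regularity of $\dd{m-1}{\seq}$. The divided-difference recursion (cf.\ \cite{milne-1951}, \S1) gives, for distinct non-negative integers $a_0,\ldots,a_m$,
\[
\dd{m-1}{\seq}(a_1,\ldots,a_m)-\dd{m-1}{\seq}(a_0,\ldots,a_{m-1})
=(a_m-a_0)\,\dd{m}{\seq}(a_0,\ldots,a_m),
\]
so $|\dd{m-1}{\seq}(a_1,\ldots,a_m)-\dd{m-1}{\seq}(a_0,\ldots,a_{m-1})|_p\le|a_m-a_0|_p$; that is, changing a single coordinate of $\dd{m-1}{\seq}$ (while keeping the entries distinct) moves the value by at most the $p$-adic size of the change. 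Because $\dd{m-1}{\seq}$ is symmetric, a telescoping over the coordinates then shows that $\dd{m-1}{\seq}$ is $1$-Lipschitz on $\Ndiag{m-1}$ for the sup-metric; the only care needed is to keep all entries distinct along the way, which one can always arrange, e.g.\ by first perturbing both tuples by suitable large powers of $p$ so that all the coordinates involved become pairwise distinct and then interpolating one coordinate at a time. Since $\Ndiag{m-1}$ is dense in $\ZZ_p^{m}$ and $\QQ_p$ is complete, $\dd{m-1}{\seq}$ extends uniquely to a $1$-Lipschitz function $g\colon\ZZ_p^{m}\to\QQ_p$. As $g$ and $\Phi_{m-1}f$ are both continuous on the dense open locus of distinct tuples in $\ZZ_p^{m}$ and agree there on $\Ndiag{m-1}$, they coincide; hence $g=\overline{\Phi}_{m-1}f$ and $f\in C^{m-1}(\ZZ_p,\QQ_p)$.

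Finally, with $f\in C^{m-1}$ established, Schikhof's formula gives $f^{(m-1)}(a)=(m-1)!\,g(a,\ldots,a)$. For $x,y\in\ZZ_p$ the points $(x,\ldots,x)$ and $(y,\ldots,y)$ of $\ZZ_p^{m}$ differ by $|x-y|_p$ in each coordinate, so
\[
|f^{(m-1)}(x)-f^{(m-1)}(y)|_p=|(m-1)!|_p\,|g(x,\ldots,x)-g(y,\ldots,y)|_p\le|(m-1)!|_p\,|x-y|_p,
\]
which is the asserted Lipschitz bound. The conceptual content is light: boundedness of $\dd{m}{\seq}$ is converted into Lipschitz-continuity of $\dd{m-1}{\seq}$ by a single use of the recursion, and the remainder is the formalism of Mahler's theorem, Theorem~\ref{prop:normdivdiff}, and Schikhof's calculus. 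The one genuinely fiddly point, which I expect to require the most care, is the telescoping step above, where one must stay inside the locus $\Ndiag{m-1}$ of tuples with distinct entries.
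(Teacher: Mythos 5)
Your proposal is correct and follows essentially the same route as the paper's proof: the divided-difference recursion yields the one-coordinate Lipschitz estimate, a telescoping chain (with the distinct-entries issue handled by perturbation/limiting) makes $\dd{m-1}{\seq}$ $1$-Lipschitz on the dense subset $\Ndiag{m-1}\subset\ZZ_p^m$, and the unique Lipschitz extension combined with Schikhof's $C^{m-1}$ criterion and the formula $f^{(m-1)}(a)=(m-1)!\,\widetilde{\dd{m-1}{\seq}}(a,\ldots,a)$ gives the stated Lipschitz constant $|(m-1)!|_p$. The only deviation is your preliminary use of Theorem~\ref{prop:normdivdiff} and Mahler's theorem to get continuity of $\seq$, which the paper does not need (Schikhof's formalism already supplies the extension) but which is correct and harmless.
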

Here we mean \emph{strict} differentiability 
(cf. \cite{schikhof}, \S29).
\begin{proof} 
For $m = 1$ this was already explained 
    just before the proposition. 
Assume $m \geq 2$. 
By the recursive formula for 
divided differences (cf. \cite{milne-1951}, \S1), 
    \begin{equation}
        \label{eqn:recursivedefndivdiff}
|\dd{m-1}{\seq}(a_0,\ldots,a_{m-1}) - 
\dd{m-1}{\seq}(a_1,\ldots,a_m)| 
\leq
|a_0 - a_m|
    \end{equation}
for all $(a_0,\ldots,a_m) \in \Ndiag{m}$. 
Thus, 
\begin{equation}\label{eqn:manyineqdq}
    |\dd{m-1}{\seq}(a_i; a_{ij}) - 
    \dd{m-1}{\seq}(a_j; a_{ij})| 
\leq
|a_i - a_j| 
\end{equation}
where $0 \leq i < j \leq m$ and 
$a_{ij}:= 
(a_0,\ldots,\widehat{a_i},\ldots,
\widehat{a_j},\ldots, a_m) 
\in 
\Ndiag{m-2}$ 
(the $\widehat{\cdot}$ indicates an omission). 

We equip $\ZZ_p^m$ with the metric given by 
$$
d(x,y):= \max_{1 \leq i \leq m}|x_i-y_i|_p.
$$
We will show that 
$\dd{m-1}{\seq}$ 
is Lipschitz continuous for this metric on 
the dense subset 
$\Ndiag{m-1} \subset \ZZ_p^m$. 
By a limiting argument, 
it suffices to show the Lipschitz condition 
for $\dd{m-1}{\seq}$ on elements $x = (x_1,\ldots,x_m)$ 
and $y = (y_1,\ldots,y_m)$ in $\Ndiag{m-1}$ such that 
$x \cap y = \varnothing$. 
Define  
$z^0 := x$, $z^m := y$, and 
$$z^{i} := 
(x_1,x_2,\ldots,x_{m-i},y_{m-i+1},\ldots,y_m)\in\ZZ_p^m
\quad
    \text{for $0 < i < m$}.$$ 
Then  
$d(z^{i},z^{i+1}) 
= 
|x_{m-i}-y_{m-i}|_p$ for 
$0 \leq i < m$. 
Since $z^{i}$ and $z^{i+1}$ differ by only one coordinate 
and $x\cap y = \varnothing$, 
we may apply \eqref{eqn:manyineqdq}. We conclude 
that 
$$
|\dd{m-1}{\seq}(z^{i}) - \dd{m-1}{\seq}(z^{i+1})|_p 
\leq 
|x_{m-i}-y_{m-i}|_p = d(z^{i},z^{i+1})
$$ 
for $0 \leq i < m$. 
Using  
$\dd{m-1}{\seq}(z^0) - \dd{m-1}{\seq}(z^m)
= \sum_{i=0}^{m-1}
(\dd{m-1}{\seq}(z^{i}) - \dd{m-1}{\seq}(z^{i+1}))$ 
shows that 
\begin{align*}
|\dd{m-1}{\seq}(x) - \dd{m-1}{\seq}(y)|_p
&\leq 
\max_{0 \leq i < m}
|\dd{m-1}{\seq}(z^{i}) - \dd{m-1}{\seq}(z^{i+1})|_p \\
&\leq
\max_{0 \leq i < m}
|x_{m-i}-y_{m-i}|_p 
 \\
&=d(x,y). 
\end{align*}
 
We have shown that $\dd{m-1}{\seq}$ is 
Lipschitz continuous on a dense subset of $\ZZ_p^m$,  
so there is a unique Lipschitz continuous extension 
$\widetilde{\dd{m-1}{\seq}} \colon \ZZ_p^m \to \CC_p$ of 
$\dd{m-1}{\seq}$. 
Therefore $\seq$ extends to a $(m-1)$-times 
strictly differentiable function 
$f\colon \ZZ_p \to \QQ_p$ satisfying 
$$f^{(m-1)}(a) = 
(m-1)!\cdot \widetilde{\dd{m-1}{\seq}}(x_a)$$ where 
$x_a := (a,a,\ldots,a)$ (cf. \cite{schikhof}, \S29). 
By Lipschitz continuity of $\widetilde{\dd{m-1}{\seq}}$ 
we get that 
\[
|f^{(m-1)}(a) - f^{(m-1)}(b)|
\leq 
|(m-1)!|_p 
d(x_a,y_b)  
= 
|(m-1)!|_p 
|a-b|_p.\qedhere\]
\end{proof} 

\begin{corollary*}
Let $s \colon \NN \to \QQ$ be a function and suppose that 
    $\dd{m+1}{\seq}$ is $\ZZ$-valued for 
    a non-negative integer $m$. 
    Then for every prime $p$, the function $\seq$ extends 
    to an $m$-times strictly differentiable 
    function $f_p \colon \ZZ_p \to \QQ_p$ such that 
    $f_p^{(m)}$ is 
    Lipschitz continuous with Lipschitz constant $|m!|_p$. 
\end{corollary*}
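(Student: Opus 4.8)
The plan is to deduce the statement directly from Proposition~\ref{prop:mahlercriterion}, applied with its parameter shifted from $m$ to $m+1$. Fix a prime $p$. Composing $\seq$ with the inclusion $\QQ \hookrightarrow \QQ_p$ produces a function $\NN \to \QQ_p$ whose $(m+1)$st divided difference is the image of $\dd{m+1}{\seq}$ under this inclusion, since the divided difference formula \eqref{eqn:divdiff} involves only $\QQ$-algebra operations and is therefore preserved by any $\QQ$-algebra homomorphism. By hypothesis $\dd{m+1}{\seq}$ takes values in $\ZZ \subseteq \ZZ_p$, so $\norm[\dd{m+1}{\seq}]_p \leq 1$.

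Next I would invoke Proposition~\ref{prop:mahlercriterion} with its parameter equal to $m+1$, which is a positive integer for every $m \geq 0$, so no case distinction is needed (the value $m = 0$ of the corollary corresponds to the boundary value $1$ of the proposition, which lies within its stated scope). The conclusion is that $\seq$ extends to an element $f_p$ of $C^{(m+1)-1}(\ZZ_p,\QQ_p) = C^{m}(\ZZ_p,\QQ_p)$, i.e.\ to an $m$-times strictly differentiable function, and that $f_p^{(m)}$ is Lipschitz continuous with Lipschitz constant $|((m+1)-1)!|_p = |m!|_p$. This is exactly the assertion of the corollary, and since $p$ was arbitrary, it holds for every prime.

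I do not anticipate a genuine obstacle: the entire analytic content — that $\norm[\dd{m+1}{\seq}]_p \leq 1$ forces $C^m$-smoothness together with a Lipschitz bound of constant $|m!|_p$ on the $m$th derivative — is already packaged in Proposition~\ref{prop:mahlercriterion}. The only points to verify are the two elementary observations that passing from $\QQ$ to $\QQ_p$ leaves the divided difference unchanged and that a $\ZZ$-valued function has $p$-adic supremum norm at most $1$, plus the routine bookkeeping of the index shift $m \mapsto m+1$.
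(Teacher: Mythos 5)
Your proposal is correct and matches the paper's (implicit) argument: the corollary is exactly Proposition~\ref{prop:mahlercriterion} applied with its parameter equal to $m+1$, after noting that $\ZZ$-valuedness gives $\norm[\dd{m+1}{\seq}]_p \leq 1$ at every prime $p$. The index bookkeeping and the remark that $m+1 \geq 1$ covers the $m=0$ case are exactly the points that need checking, and you have checked them.
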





\section*{Acknowledgements}

The author is grateful to Kartik Prasanna for 
a suggestion which simplified 
the proof of Theorem~\ref{thm:boundedmahler}. 
The author is also grateful to 
the anonymous referee for several valuable suggestions. 
The author would also like to thank Angus Chung and  
Trevor Hyde for helpful conversations. 


\bibliography{fd-local} 
\bibliographystyle{abbrv}

\end{document}